\documentclass[12pt]{amsart}
\usepackage{amssymb,amsmath,amsfonts,latexsym}
\usepackage{bm}
\usepackage[all,cmtip]{xy}
\usepackage{amscd}
\usepackage{multirow,bigdelim}

\usepackage{xcolor}

\setlength{\textheight}{600pt} \setlength{\textwidth}{475pt}
\oddsidemargin -0mm \evensidemargin -0mm \topmargin -0pt
\newcommand{\newsection}[1]{\setcounter{equation}{0} \section{#1}}
\setcounter{footnote}{1}

%%%%%%%%%%%%%%%%%%%%%%%%%%%Shortenings%%%%%%%%%%%%%%%%%%%
\newcommand{\bea}{\begin{eqnarray}}
\newcommand{\eea}{\end{eqnarray}}

\newcommand{\vp}{\varphi}

\newcommand{\cla}{\mathcal{A}}
\newcommand{\clb}{\mathcal{B}}

\newcommand{\cld}{\mathcal{D}}
\newcommand{\cle}{\mathcal{E}}
\newcommand{\clf}{\mathcal{F}}
\newcommand{\clg}{\mathcal{G}}
\newcommand{\clh}{\mathcal{H}}
\newcommand{\clk}{\mathcal{K}}

\newcommand{\clm}{\mathcal{M}}

\newcommand{\clq}{\mathcal{Q}}
\newcommand{\clr}{\mathcal{R}}
\newcommand{\cls}{\mathcal{S}}

\newcommand{\z}{\bm{z}}
\newcommand{\w}{\bm{w}}

\newcommand{\D}{\mathbb{D}}

\newcommand{\raro}{\rightarrow}

\def\textmatrix#1&#2\\#3&#4\\{\bigl({#1 \atop #3}\ {#2 \atop #4}\bigr)}
\def\dispmatrix#1&#2\\#3&#4\\{\left({#1 \atop #3}\ {#2 \atop #4}\right)}
\newcommand{\be}{\begin{equation}}
\newcommand{\ee}{\end{equation}}
\newcommand{\ben}{\begin{eqnarray*}}
\newcommand{\een}{\end{eqnarray*}}

\newcommand{\NI}{\noindent}

\newcommand{\bi}{\begin{itemize}}
\newcommand{\ei}{\end{itemize}}

%%%%%%%%%%%%%Theorem Environment%%%%%%%%%%%%%%%%%%%%%%

%\newtheorem{Definition}[Theorem]{\sc Definition}
\theoremstyle{definition}

%%%%%%%%%%%%%%%%%%%%%%%%%%%%%%%%%%%%%%%%%%%%%%%%%%%%%%%%%%%%%%%%%%%
% THEOREM Environments ---------------------------------------------------
\theoremstyle{plain}

\newtheorem{thm}{Theorem}[section]

\theoremstyle{definition}

\newtheorem{rem}[thm]{Remark}
\newtheorem{ex}[thm]{Example}

\numberwithin{equation}{section}

\def\onegroup{\hskip\arraycolsep\underbrace{0 \hskip\arraycolsep\dots \hskip\arraycolsep 0}_{(j+1)}}
%%%%%%%%%%%%%%%%%%%%%%%%%%%%%%%%%%%%%%%%%%%%%%%%%%%%%%%%%%%%%%%%%%%%%%%
%\newcommand{\Complex}{\mathbb{C}}

\let\phi=\varphi

\begin{document}

\title[Schur functions and inner functions on the bidisc]{Schur functions and inner functions on the bidisc}

\author[Debnath]{Ramlal Debnath}
\address{Indian Statistical Institute, Statistics and Mathematics Unit, 8th Mile, Mysore Road, Bangalore, 560059, India}
\email{ramlal\_rs@isibang.ac.in, ramlaldebnath100@gmail.com}

\author[Sarkar]{Jaydeb Sarkar}
\address{Indian Statistical Institute, Statistics and Mathematics Unit, 8th Mile, Mysore Road, Bangalore, 560059, India}
\email{jay@isibang.ac.in, jaydeb@gmail.com}

%\today

\subjclass[2010]{46C07, 46E22, 47A48, 47B32, 30J05, 30H10, 47A15, 47B35}

\keywords{Realization formula, inner functions, Agler decompositions, Agler kernels, Schur functions, Hardy space, de Branges-Rovnyak spaces}

\begin{abstract}
We study representations of inner functions on the bidisc from a fractional linear transformation point of view. We provide sufficient conditions, in terms of colligation matrices, for the existence of two-variable inner functions. Here the sufficient conditions are not necessary in general, and we prove a weak converse for rational inner functions that admit one variable factorization.

\noindent We present a classification of de Branges-Rovnyak kernels on the bidisc (which equally works in the setting of polydisc and the open unit ball of $\mathbb{C}^n$, $n \geq 1$). We also classify, in terms of Agler kernels, two-variable Schur functions that admit one variable factor.
\end{abstract}

\maketitle

\newsection{Introduction}

Let $\D^n = \{\z = (z_1, \ldots, z_n) \in \mathbb{C}^n: |z_1|, \ldots, |z_n| <1\}$ denote the open unit polydisc and $H^\infty(\D^n)$ denote the Banach algebra of all bounded analytic functions on $\D^n$ with the uniform norm
\[
\|\vp\|_\infty := \sup \{|\vp(\z)|: \z \in \D^n\} \qquad (\vp \in H^\infty(\D^n)).
\]
A function $\vp \in H^\infty(\D^n)$ is said to be \textit{Schur function} if $\|\vp\|_\infty \leq 1$. We denote by $\cls(\mathbb{D}^n)$ the set of Schur functions defined on $\D^n$. A function $\vp \in \cls(\mathbb{D}^n)$ is said to be \textit{inner} if
\[
\lim\limits_{r\nearrow 1}|\vp(re^{it_1},\cdots, re^{it_n})|=|\vp(e^{it_1},\cdots, e^{it_n})|=1,
\]
almost everywhere on the distinguished boundary $\mathbb{T}^n$ of $\D^n$. For example, every rational inner function $\vp \in \cls(\D^n)$ has the form
\[
\vp(\z)= M \frac{\overline{p(\frac{1}{\bar{z}_1}, \ldots, \frac{1}{\bar{z}_n} )}}{p(\z)},
\]
where $M$ is a monomial and $p$ is a polynomial with no zeros in $\D^n$ (see Rudin \cite[Theorem 5.2.5]{Rud}).

The principle aim of this paper is threefold: (1) Representations of inner functions in $\cls(\D^2)$ in terms of isometric colligation operators (a certain class of $2 \times 2$ block operator matrices). (2) Classification of de Branges-Rovnyak kernels on $\D$ (which equally works in the setting of $\D^n$ and the open unit ball in $\mathbb{C}^n$, $n \geq 1$). (3) Classification, in terms of Agler kernels, of Schur functions in $\cls(\D^2)$ that admit one variable factor.

To further elaborate on the main contribution of this paper, we recall some classical results. First let us recall the fractional linear transformation representations of Schur functions on $\D$ \cite{AM, F}. Let $\vp : \D \raro \mathbb{C}$ be a function. Then $\vp \in \cls(\D)$ if and only if there exist a Hilbert space $\clh$ (known as \textit{state space}) and a $2 \times 2$ block operator matrix (known as \textit{colligation matrix/operator})
\[
V = \begin{bmatrix} a & B \\ C & D \end{bmatrix} : \mathbb{C} \oplus \mathcal{H} \rightarrow \mathbb{C} \oplus \mathcal{H},
\]
such that $V$ is an isometry(/co-isometry/unitary/contraction) and $\vp = \tau_V$, where
\[
\tau_V(z) = a + z B (I_{\mathcal{H}} - z D)^{-1} C \quad \quad (z \in \mathbb{D}).
\]
We call $\tau_V$ the \textit{transfer function} or the \textit{realization function} corresponding to the colligation operator $V$. Moreover (see \cite[Theorem 7.10, page 110, and Theorem 10.1, page 122]{F}):

\begin{thm}\label{thm: classical inner}
Let $\vp\in \cls(\D)$. Then $\vp$ is inner if and only if $\vp = \tau_V$ for some isometric colligation
\[
V=\begin{bmatrix}
a&B\\
C&D
\end{bmatrix} \in \clb(\mathbb{C}\oplus\mathcal{H}),
\]
with $D\in C_{0\cdot}$.
\end{thm}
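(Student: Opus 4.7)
The plan hinges on the fundamental norm identity
\begin{equation*}
1 - |\tau_V(z)|^2 \;=\; (1-|z|^2)\,\|(I_\clh - zD)^{-1}C\|^2 \qquad (z \in \D),
\end{equation*}
valid for every isometric colligation $V = \begin{bmatrix} a & B \\ C & D\end{bmatrix}$ on $\mathbb{C} \oplus \clh$, where we identify $C$ with the vector $C\cdot 1 \in \clh$. I would derive this identity first: the relation $V^*V = I$ forces the three coupled equations $|a|^2 + \|C\|^2 = 1$, $\bar{a}B + C^*D = 0$, and $B^*B + D^*D = I_\clh$; substituting them into the expansion of $1 - \overline{\tau_V(z)}\tau_V(z)$ and writing $(I-zD)^{-1}$ as a geometric series, the identity collapses to the formal tautology $(I - \bar z D^*)^{-1}(I-zD)^{-1} = \sum_{j,k \geq 0} \bar z^j z^k D^{*j}D^k$.

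For the \emph{sufficiency} direction, suppose $V$ is isometric with $D \in C_{0\cd}$. Integrating the identity over the circle $|z|=r$ and applying Parseval to the expansion $(I-zD)^{-1}C = \sum_n z^n D^n C$ yields
\begin{equation*}
\int_0^{2\pi}\bigl(1 - |\tau_V(re^{it})|^2\bigr) \frac{dt}{2\pi} \;=\; (1-r^2)\sum_{n \geq 0} r^{2n}\|D^n C\|^2.
\end{equation*}
The third isometric relation yields $D^*D \leq I$, so $D$ is a contraction and $a_n := \|D^n C\|^2$ is a nonnegative decreasing sequence; by hypothesis $a_n \to 0$. A standard Abel-summation estimate then shows $(1-r^2)\sum_n r^{2n}a_n \to 0$ as $r \nearrow 1$. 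Hence $1 - |\tau_V(re^{it})|^2 \to 0$ in $L^1(\mathbb{T})$, and combined with $|\tau_V| \leq 1$ (direct from the identity), this forces $|\tau_V(e^{it})| = 1$ a.e.\ on $\mathbb{T}$, so $\tau_V$ is inner.

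For the \emph{necessity} direction, I would start with any isometric colligation $V$ realizing $\vp$, whose existence is the classical Schur realization theorem. Running the same computation in reverse, the hypothesis $|\vp|=1$ a.e.\ on $\mathbb{T}$ forces $(1-r^2)\sum_n r^{2n}a_n \to 0$; since the Abel means of a bounded decreasing nonnegative sequence converge to its limit, we conclude $\|D^n C\| \to 0$. This by itself does not imply $D \in C_{0\cd}$ on all of $\clh$; however, $D$ is a contraction and $D^k C \in \clh_0 := \overline{\mathrm{span}}\{D^n C : n \geq 0\}$ for all $k$, so a routine density argument gives $\|D^n v\| \to 0$ for every $v \in \clh_0$. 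Since $\clh_0$ is $D$-invariant and contains the range of $C$, the restriction of $V$ to $\mathbb{C} \oplus \clh_0$ remains an isometric colligation, realizes the same $\vp$ (as $(I-zD)^{-1}C$ already lies in $\clh_0$), and has state operator in $C_{0\cd}$.

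The principal technical hurdle is the norm identity itself; once it is in hand both directions reduce to a short Abelian–Tauberian argument. A secondary but necessary check is that passing to the reachable subspace $\clh_0$ preserves both the isometric property of $V$ and the value of the transfer function, which is essentially formal.
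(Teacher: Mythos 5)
Your proof is correct, and it takes a genuinely different route from the paper's. The paper proves only the necessity direction (inner implies realizable with $D \in C_{0\cdot}$) and does so concretely: given an inner $\vp$, it builds the colligation $V$ directly on the model space $\clq_\vp = H^2(\D) \ominus \vp H^2(\D)$ with $D = M_z^*|_{\clq_\vp}$ (which lies in $C_{0\cdot}$ automatically, since $M_z^{*n} \to 0$ strongly on all of $H^2(\D)$), and verifies $\vp = \tau_V$ by reading off Taylor coefficients. For the sufficiency direction the paper simply cites \cite{F} and remarks that that argument ``remains to be involved.'' You instead extract the single identity $1 - |\tau_V(z)|^2 = (1-|z|^2)\,\|(I - zD)^{-1}C\|^2$ from the three scalar relations hidden in $V^*V = I$, integrate over circles $|z|=r$, and read off both implications from the behaviour of the Abel means of $a_n = \|D^n C\|^2$: an Abelian argument gives sufficiency, an elementary Tauberian one (the sequence is decreasing and nonnegative, so its Abel means dominate its infimum) gives $\|D^nC\|\to 0$ for necessity. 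The one extra step you need in the necessity direction --- passing from $\|D^n C\| \to 0$ to a genuine $C_{0\cdot}$ condition by restricting to the reachable subspace $\clh_0 = \overline{\mbox{span}}\{D^n C : n\geq 0\}$ --- is exactly right and checks out: $\mathbb{C}\oplus\clh_0$ is $V$-invariant (since $C\cdot 1 \in \clh_0$ and $D\clh_0 \subseteq \clh_0$), so the restricted $V$ is still an isometric colligation, and $(I-zD)^{-1}C$ already lives in $\clh_0$, so the transfer function is unchanged. What the paper's construction buys is an explicit, concrete model (which it then mimics in the bidisc setting and revisits in the example in Section~4); what your argument buys is a unified, self-contained proof of both directions built on one identity, with a noticeably leaner sufficiency argument than the one deferred to \cite{F}. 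Both are complete proofs.
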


Here $C_{0\cdot}$ denotes the set of all contractions $T$ (on Hilbert spaces) such that $T^n \raro0$ in the strong operator topology, and $\clb(\clh, \clk)$ (or simply $\clb(\clh)$ if $\clh = \clk$) denotes the Banach space of bounded linear operators from a Hilbert space $\clh$ into a Hilbert space $\clk$. We present a simpler proof of the necessary part of the above theorem at the end of this section.

Our first aim in this paper is to provide similar sufficient (as well as necessary, for reducible rational functions) conditions for a function in $\cls(\D^2)$ to be inner. Our presentation here, needless to say, is based on Agler's realization formula and Agler kernels for functions in $\cls(\D^2)$ \cite{JA1}. Let us briefly recall the definition of kernel functions. Let $\cle$ be a Hilbert space, and let $\Omega$ be a domain in $\mathbb{C}^n$. A function $K: \Omega \times \Omega \rightarrow \clb(\cle)$  is called a \textit{kernel} (denoted by $K \geq 0$) if
\[
\sum\limits_{i,j=1}^m\langle K(\z_i,\z_j)\eta_j,\eta_i \rangle_{\cle} \geq 0,
\]
for all $\{\z_1,\dots, \z_m\}\subseteq \Omega$, $\{\eta_1,\dots, \eta_m\} \subseteq \cle$  and $m \geq 1$ (see the monograph by Paulsen and Raghupathi \cite{RP}, or Szafraniec \cite{Sz} for a rapid introduction to reproducing kernels).

\begin{thm}[Agler]\label{thm-Agler D2}
Let $\vp: \D^2 \raro \mathbb{C}$ be a function. Then the following are equivalent:

(i) $\vp \in \cls (\D^2)$.

(ii) There exist Hilbert spaces $\clh_1$ and $\clh_2$ and a unitary/isometric/co-isometric colligation operator
\[
V = \begin{bmatrix} a & B \\ C & D \end{bmatrix} \in \clb(\mathbb{C} \oplus (\mathcal{H}_1\oplus\mathcal{H}_2)),
\]
such that $\vp = \tau_V$, where
\[
\tau_V(\z) = a + B (I_{\clh_1 \oplus \clh_2} -E_{\clh_1 \oplus \clh_2}(\z) D)^{-1} E_{\clh_1 \oplus \clh_2}(\z) C,
\]
and $E_{\clh_1 \oplus \clh_2}(\z) = z_1 I_{\clh_1} \oplus z_2 I_{\clh_2}$ for all $\z \in \D^2$.

(iii) There exist kernels $\{K_1, K_2\}$ such that
\[
1 - \vp(\z) \overline{\vp(\w)} = (1 - z_1 \bar{w}_1) K_1(\z, \w) + (1 - z_2 \bar{w}_2) K_2(\z, \w) \qquad (\z, \w \in \D^2).
\]
\end{thm}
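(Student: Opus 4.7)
My plan is to establish the three equivalences by a cyclic argument $\mathrm{(iii)} \Rightarrow \mathrm{(ii)} \Rightarrow \mathrm{(i)} \Rightarrow \mathrm{(iii)}$, deferring the deep step (Agler's theorem proper) to last. The first two implications are essentially bookkeeping once the correct viewpoint is adopted.

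For $\mathrm{(iii)} \Rightarrow \mathrm{(ii)}$ I would apply the standard lurking isometry argument. Kolmogorov-factorize $K_j(\z,\w) = \langle \gamma_j(\w), \gamma_j(\z) \rangle_{\clh_j}$ for suitable $\clh_j$ and $\gamma_j : \D^2 \to \clh_j$, and rewrite the Agler decomposition in the \emph{symmetrized} form
\[
1 + z_1 \bar w_1 K_1(\z,\w) + z_2 \bar w_2 K_2(\z,\w) = \vp(\z) \overline{\vp(\w)} + K_1(\z,\w) + K_2(\z,\w).
\]
This equality of kernels identifies the inner products on each side inside $\mathbb{C} \oplus \clh_1 \oplus \clh_2$, so that the correspondence
\[
\begin{pmatrix} 1 \\ z_1 \gamma_1(\z) \\ z_2 \gamma_2(\z) \end{pmatrix} \longmapsto \begin{pmatrix} \vp(\z) \\ \gamma_1(\z) \\ \gamma_2(\z) \end{pmatrix}
\]
extends to a well-defined isometry between the closed linear spans of the two families. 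After enlarging the $\clh_j$ as needed and extending to a unitary/isometric/co-isometric $V^*$ on all of $\mathbb{C} \oplus (\clh_1 \oplus \clh_2)$, writing $V$ in block form $\bigl[\begin{smallmatrix} a & B \\ C & D \end{smallmatrix}\bigr]$ and solving the scalar and vector components of the extended equation recovers exactly the transfer-function formula $\vp = \tau_V$.

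For $\mathrm{(ii)} \Rightarrow \mathrm{(i)}$ the calculation is short: using the isometric identity $V^*V = I$ and the Neumann series expansion of $(I - E_{\clh_1 \oplus \clh_2}(\z) D)^{-1}$, one derives
\[
1 - \tau_V(\z)\overline{\tau_V(\w)} = (1 - z_1 \bar w_1) K_1^V(\z,\w) + (1 - z_2 \bar w_2) K_2^V(\z,\w)
\]
with explicit positive kernels $K_j^V$ built from $(I - E_{\clh_1 \oplus \clh_2}(\z) D)^{-1} C$; specializing $\w = \z$ gives $|\tau_V(\z)| \leq 1$. As a bonus this simultaneously proves $\mathrm{(ii)} \Rightarrow \mathrm{(iii)}$, so the only substantive remaining implication is $\mathrm{(i)} \Rightarrow \mathrm{(iii)}$.

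My plan for this final step is Hahn--Banach cone separation. Let $\clc$ be the convex cone of Hermitian functions on $\D^2 \times \D^2$ of the form $(1 - z_1 \bar w_1) L_1 + (1 - z_2 \bar w_2) L_2$ with $L_1, L_2 \geq 0$; topologize the ambient space by pointwise convergence and check that $\clc$ is closed. If $1 - \vp(\z)\overline{\vp(\w)} \notin \clc$, Hahn--Banach produces a real linear functional $\Phi$ that is nonnegative on $\clc$ and strictly negative at $1 - \vp \bar\vp$. A GNS-type construction organized around the two sub-cones $\{(1 - z_j \bar w_j) L_j : L_j \geq 0\}$ then promotes $\Phi$ into a pair of commuting contractions $(T_1, T_2)$ on a Hilbert space together with a cyclic vector $\xi$. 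Ando's dilation theorem dilates this pair to commuting unitaries, after which the two-variable von Neumann inequality $\|\vp(T_1, T_2)\xi\| \leq \|\vp\|_\infty \|\xi\| \leq \|\xi\|$ unwinds into $\Phi(1 - \vp \bar\vp) \geq 0$, contradicting the separation. The main obstacle is precisely this final implication: it leans essentially on the two-variable structure through Ando's theorem, and the analogous argument genuinely fails on $\D^n$ for $n \geq 3$, which is the entire reason Agler decompositions are a bidisc phenomenon.
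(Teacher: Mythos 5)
The paper does not prove this theorem; it is stated as Theorem~\ref{thm-Agler D2} with the attribution ``Agler'' and a citation to \cite{JA1}, and the surrounding text treats it as known background. So there is no internal proof to compare yours against; what I can do is assess your plan on its own terms, and in fact it is the standard proof of Agler's realization theorem (as in, e.g., the Agler--McCarthy approach), so the broad architecture is sound.

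Two places deserve more care. First, in the lurking-isometry step you should track the conjugations: with the Kolmogorov factorization $K_j(\z,\w)=\langle \gamma_j(\w),\gamma_j(\z)\rangle_{\clh_j}$, the rearranged identity reads
\[
\bigl\langle 1,1\bigr\rangle + \bigl\langle \bar w_1\gamma_1(\w),\bar z_1\gamma_1(\z)\bigr\rangle + \bigl\langle \bar w_2\gamma_2(\w),\bar z_2\gamma_2(\z)\bigr\rangle = \bigl\langle \vp(\z),\vp(\w)\bigr\rangle + \bigl\langle \gamma_1(\w),\gamma_1(\z)\bigr\rangle + \bigl\langle \gamma_2(\w),\gamma_2(\z)\bigr\rangle,
\]
so the densely defined isometry sends $(1,\bar w_1\gamma_1(\w),\bar w_2\gamma_2(\w))\mapsto(\overline{\vp(\w)},\gamma_1(\w),\gamma_2(\w))$, and it is the adjoint of the extension that gives the colligation for $\tau_V$; compare the proof of Theorem~\ref{thm: de Branges D} in this paper, which runs exactly this way. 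Your version with $z_j\gamma_j(\z)$ and no bars does not directly encode an isometry. This is routine bookkeeping, but it is exactly the kind of thing that goes wrong if left as written.

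Second, and more substantively, the claim ``topologize the ambient space by pointwise convergence and check that $\clc$ is closed'' glosses over a genuine difficulty: pointwise convergence of sums $(1-z_1\bar w_1)L_1^{(m)}+(1-z_2\bar w_2)L_2^{(m)}$ does not obviously yield convergence of the individual $L_j^{(m)}$, so closedness of the cone over all of $\D^2\times\D^2$ is not immediate. The standard remedy is to run the Hahn--Banach/GNS/Ando argument on the restriction of the cone to a finite subset $F\subset\D^2$, where the cone is a closed cone in a finite-dimensional space (closedness there still requires an argument, using the a priori bound $L_j(\z,\z)\leq (1-|z_j|^2)^{-1}$ coming from setting $\w=\z$, together with Cauchy--Schwarz). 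This produces Agler kernels $\{K_1^F,K_2^F\}$ for each finite $F$, and one then exhausts $\D^2$ by finite sets and uses the same local bound to extract a convergent diagonal subsequence, giving $\{K_1,K_2\}$ on all of $\D^2$. Your sketch is silent on both the finite-set reduction and the limit passage; without them the separation argument does not get off the ground. With those two points repaired the proposal is correct, and your closing remark about $\D^n$, $n\geq 3$, is on target: it is precisely Ando's theorem that confines this argument to the bidisc, which is why for $n\geq 3$ one must pass to the Schur--Agler class as in Section~\ref{section: de Branges-Rovnyak}.
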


The kernels $\{K_1, K_2\}$ in (ii) are known as \textit{Agler kernels} of $\vp$, and the identity in (iii) is known as the \textit{Agler decomposition} of $\vp$ corresponding to the Agler kernels $\{K_1, K_2\}$ \cite[Section 3.1]{BB}. We also call $\clh_1 \oplus \clh_2$ in (ii) as a \textit{state space} of $\vp$.

Agler kernels were essentially introduced by Jim Agler
\cite{Agler preprint} (also see \cite{JA1} and \cite{AM1}) in his study of Nevanlinna-Pick interpolation in the setting of bidisc. Agler kernels also play an important role (cf. \cite{BL1, GK,SYY}) in the delicate structure of shift-invariant subspaces of the Hardy space over the bidisc \cite{GW}. Moreover, in the case of $\mathbb{D}^n$, $n \geq 3$, Agler kernels are associated with those bounded analytic functions on $\D^n$ that satisfy the multivariable von Neumann inequality (see the discussion following Theorem \ref{cor: de Branges NF}). Needless to say, bounded analytic functions on $\D^n$, $n \geq 3$ satisfying the multivariable von Neumann inequality are of interest. From these points of view, the concept of Agler kernels has become an inseparable part of the modern theory of bounded analytic functions.

We now return to the topic of representations of inner functions in $\cls(\D^2)$. An analog of Theorem \ref{thm: classical inner} for inner functions in $\cls(\D^2)$ seems to be a subtle and unattended problem. Here the main difficulty is to deal with the $2 \times 2$ block operator matrix $D \in \clb(\clh_1 \oplus \clh_2)$, or more specifically, with the resolvent part of $\tau_V$ which involves inverse of $2 \times 2$ block operator matrix. Instead, in Theorem \ref{thm-inner new} we prove that a function $\vp \in \cls(\D^2)$ is inner whenever $\vp = \tau_V$ for some isometric colligation
\[
V=\begin{bmatrix}
a&B_1&B_2\\
C_1&D_{1}&D_{2}\\
C_2&0&D_{3}
\end{bmatrix}
 \in \clb(\mathbb{C} \oplus (\mathcal{H}_1\oplus\mathcal{H}_2)),
\]
with $D_1, D_3 \in C_{0\cdot}$. This is the main content of Section \ref{sec-4}.

\NI The converse of the above fact is not true in general (see Example \ref{example: counter}). However, a weak converse holds for rational inner functions that admit one variable factorization (see Theorem \ref{thm: weak converse}). These are the main content of Section \ref{sec: Counterexamples}.

Now we turn to our second goal of this paper: classification of de Branges-Rovnyak kernels on $\D^n$ and the open unit ball of $\mathbb{C}^n$. Here we explain the idea in the setting of operator-valued Schur functions on $\D$. Suppose $\Theta: \D \raro \clb(\cle, \cle_*)$ is a \textit{Schur function}, that is, $\Theta$ is a $\clb(\cle,\cle_*)$-valued analytic function on $\D$ and $\sup\limits_{z \in \D}\|\Theta(z)\| \leq 1$ (in notation, $\Theta \in \cls(\D, \clb(\cle,\cle_*))$). We call the kernel
\[
K_{\Theta}(z,w):=\frac{I-\Theta(z)\Theta(w)^*}{1-z\overline{w}},\qquad (z,w\in \D).
\]
the \textit{de Branges-Rovnyak kernel} corresponding to $\Theta$. The classical de Branges-Rovnyak theory says that the kernel of a contractively contained shift-invariant (not necessarily closed) subspace of the Hardy space $H^2_{\cle_*}(\D)$ is $K_{\Theta}$ for some $\Theta\in\cls(\D, \clb(\cle,\cle_*))$ and Hilbert space $\cle$.

\NI Section \ref{section: de Branges-Rovnyak} concentrates on the following question: How can we recognize when a kernel admits a de Branges-Rovnyak kernel representation?

\NI The following is our answer to this question (see Theorem \ref{thm: de Branges D}): Let $K \geq 0$ be a $\clb(\cle_*)$-valued kernel (which is not a priori analytic in its first variable). Then $K = K_\Theta$ for some $\Theta\in\cls(\D, \clb(\cle,\cle_*))$ and Hilbert space $\cle$ if and only if
\[
I_{\cle_*}-(1-z\overline{w}) \cdot K\geq 0,
\]
where $\cdot$ denotes the Hadamard product. This also covers a (variation of the) classical result due to de Branges and Rovnyak (see Theorem \ref{cor: de Branges NF} and the discussion preceding it).

\NI In the setting of Schur-Agler functions on $\D^n$ (see more details in Section \ref{section: de Branges-Rovnyak}), in Theorem \ref{thm: de Branges Dn} we prove the following: Let $K: \D^n \times \D^n \raro \clb(\cle_*)$ be a kernel on $\D^n$ (again, $K$ is not a priori analytic in $z_1, \ldots, z_n$). Then there exist a Hilbert space $\cle$ and a $\clb(\cle,\cle_*)$-valued Schur-Agler function $\Theta$ (in notation, $\Theta \in \cls\cla(\D^n,\clb(\cle,\cle_*))$) such that
\[
K = K_{\Theta},
\]
where
\[
K_{\Theta}(\z, \w) := \frac{I - \Theta(\z) \Theta(\w)^*}{\prod\limits_{i=1}^n (1 - z_i \bar{w}_i)} \quad \quad (\z, \w \in \D^n),
\]
if and only if there exist $\clb(\cle_*)$-valued kernels $K_1, \ldots, K_n$ (we call it \textit{Agler kernels} of $\vp$) on $\D^n$ such that
\[
K(\z, \w) = \sum_{i=1}^n \frac{1}{\prod\limits_{j \neq i} (1 - z_j \bar{w}_j)} K_i(\z, \w) \quad \quad (\z, \w \in \D^n),
\]
and
\[
I_{\cle_*} - \Big(\prod\limits_{i=1}^n (1 - z_i \bar{w}_i)\Big) \cdot K \geq 0.
\]
An analogous but somewhat simpler statement also holds in the setting of multipliers of Drury-Arveson space (see Theorem \ref{thm: de Branges Bn}).

The final goal of this paper is to describe those two-variable Schur functions that admit one variable Schur factor. This is the main content of Section \ref{Section: agler kernel}. More specifically (see Theorem \ref{thm: agler kernel and fact}):
Let $\vp \in \cls(\D^2)$ and suppose $\vp(\bm{0})\neq 0$, where $\bm{0} = (0,0)$ (see Remark \ref{rem: phi(0) =0} on the assumption $\vp(\bm{0}) \neq 0$). The following assertions are equivalent:

\NI(1) There exist $\vp_1$ and $\vp_2$ in $\cls(\D)$ such that $\vp(\z)=\vp_1(z_1)\vp_2(z_2)$, $\z \in \D^2$.

\NI (2) There exist Agler kernels $\{K_1, K_2\}$ of $\vp$ such that $K_1$ depends only on $z_1$ and $\bar{w}_1$, and
\[
\overline{\vp(\bm{0})} \, K_2(\cdot,(w_1,0)) = \overline{\vp(w_1,0)} \, K_2(\cdot, \bm{0}) \qquad (w_1 \in \D).
\]
(3)	There exist Agler kernels $\{L_1, L_2\}$ of $\vp$ such that all the functions in $\clh_{L_1}$ depends only on $z_1$, and $\vp(\bm{0}) f(\cdot,0) = \vp(\cdot,0) \, f(\bm{0})$, $f \in \clh_{L_2}$ (here $\clh_K$ refers to the reproducing kernel Hilbert space corresponding to a kernel $K$).

\NI (4) $\vp = \tau_V$ for some co-isometric colligation
\[
V=\begin{bmatrix}
\vp(\bm{0})&B_1&B_2\\C_1&D_1&D_2\\C_2&0&D_4
\end{bmatrix} \in \clb(\mathbb{C}\oplus (\mathcal{H}_1\oplus \mathcal{H}_2)),
\]
with $\vp(\bm{0}) D_2 = C_1B_2$.

We remark that, given the importance of the rich structure, inner functions on the bidisc have been considered in many occasions previously in different contexts (cf. \cite{BL1,SYY}). We also refer to \cite{B et al, GK, HW} and the references therein for the recent and more modern development of Schur functions, Agler kernels, and transfer function realizations.

It is worthwhile to point out that our main motivation of colligation matrices, as in part (4) above and the one following Theorem \ref{thm-Agler D2}, comes from the recent paper \cite{DS}. That said, the present paper is intended as a follow-up, to some extent, to \cite{DS} on one hand, and is also designed to be self-contained on the other.

We end this section with a simple proof of the necessary part of Theorem \ref{thm: classical inner}. The idea of the proof (as also hinted at the final paragraph of this section) is the same as that used in proving analytic representations of commutators of shift operators \cite[Theorem 2]{MSS}. Let $\vp\in \cls(\D)$ be an inner function, and suppose
\[
\vp = \sum_{m= 0}^\infty a_m z^m,
\]
the power series representation of $\vp$ on $\D$. Consider $M_\vp$ as an isometric multiplier on $H^2(\D)$ (that is, $\|M_\vp f\| = \|f\|$ for all $f \in H^2(\D)$), and set $\clq_\vp = H^2(\D) \ominus \vp H^2(\D)$ (that is, $\clq_\vp$ is the orthogonal complement of $\vp H^2(\D)$ in $H^2(\D)$). We clearly have
\[
a_m = P_{\mathbb{C}} M_z^{*m} M_\vp|_{\mathbb{C}} \qquad (m \geq 0),
\]
where $P_{\mathbb{C}}$ denotes the orthogonal projection  onto the space of constant functions in $H^2(\D)$. Note that $M_{\vp}|_{\mathbb{C}} = \vp$.  Since $M_z^* \clq_\vp \subseteq \clq_\vp$ and $M_z^* \vp \in \clq_{\vp}$ (indeed, $\langle M_z^* \vp, \vp f \rangle = \langle M_z^*1, f \rangle = 0$ for all $f \in H^2(\D)$), it follows that
\begin{equation}\label{eqn:phi representation}
\vp(w) =  P_{\mathbb{C}}M_{\vp}|_{\mathbb{C}}+wP_{\mathbb{C}}|_{\clq_{\vp}}(I_{\clq_{\vp}}-wM^*_z|_{\clq_{\vp} })^{-1}M_z^*M_{\vp}|_{\mathbb{C}} \qquad (w \in \D).
\end{equation}
Clearly
\[
V = \begin{bmatrix}\vp(0)&P_{\mathbb{C}}|_{\clq_{\vp}}\\ M_z^*M_{\vp}|_{\mathbb{C}}& M^*_z|_{\clq_{\vp}}\end{bmatrix},
\]
defines a  unitary colligation operator on $\mathbb{C} \oplus \clq_{\vp}$. And, of course, we have $M^*_z|_{\clq_{\vp}} \in C_{0 \cdot}$ and $\vp = \tau_V$.

Note that the representation of $\vp$ in \eqref{eqn:phi representation} reduces to a more compact form as
\[
\vp(w) = P_{\mathbb{C}}(I_{H^2(\D)}-wM^*_z)^{-1}M_{\vp}|_{\mathbb{C}} \qquad (w\in \D).
\]
This formula is comparable to the representation of commutators in the statement of Theorem \cite[Theorem 2]{MSS}. In other words, the above result also follows from Theorem \cite[Theorem 2]{MSS}. However, proof of the sufficient part (as in \cite{F}) of Theorem \ref{thm: classical inner} remains to be involved.

\newsection{Inner Functions and Realizations}\label{sec-4}

Our purpose here is to prove an analogous statement of the sufficient part of Theorem \ref{thm: classical inner}. We will again return to this topic in Section \ref{sec: Counterexamples} with some counterexamples and a weak converse.

It will be convenient, to begin with, some terminology and basic observations. The following construction also could be of some independent interest. We write $\oplus l^2 = l^2 \oplus l^2 \oplus \cdots$, that is
\[
\oplus l^2 = \Big\{ \{a_{ij}\}:= \Big\{ \{ \{a_{0j}\}_{j\geq 0}, \{a_{1j}\}_{j\geq 0}, \{a_{2j}\}_{j\geq 0}, \ldots \} : \sum_{i,j =0}^\infty |a_{ij}|^2 < \infty \Big\}.
\]
One can easily verify that
\[
\tau (\{a_{ij}\})= \sum_{i=0}^{\infty} \sum_{j=0}^{\infty} a_{ij}z_1^iz_2^j,
\]
defines a unitary $\tau: \oplus l^2 \raro H^2(\D^2)$, and $M_{z_1} \tau = \tau S$, where $S$ denotes the \textit{shift} on $\oplus l^2$, that is
\[
S\Big(\{a_{ij}\} \Big) = \Big\{\{0\}, \{a_{0j}\}_{j\geq 0}, \{a_{1j}\}_{j\geq 0}, \ldots \Big\}.
\]
Here $\{0\} \in l^2$ is the zero sequence. Now, let $\vp = \sum\limits_{i=0}^{\infty}(\sum\limits_{j=0}^{\infty} \vp_{ij} z_2^j) z_1^i \in H^\infty (\D^2)$. We define the \textit{block Toeplitz operator} with symbol ${\vp}$ to be the bounded linear operator $T_\vp$ on $\oplus l^2$ defined by
\[
\Big(T_{\vp} \Big(\{a_{ij}\}\Big)\Big)_{ij} = \sum_{k=0}^i \sum_{l=0}^j \vp_{i-k, j-l} a_{kl} \quad \quad (i,j \geq 0),
\]
which in matrix notation becomes
\[
T_{\vp} = \begin{bmatrix}
\Phi_0&0&0&0&\cdots\\
\Phi_1 & \Phi_0&0&0&\cdots\\
\Phi_2& \Phi_1& \Phi_0&0&\cdots\\
\vdots&\vdots&\vdots&\vdots&\ddots
\end{bmatrix},
\]
where
\[
\Phi_k =\begin{bmatrix}
\vp_{k 0} & 0 & 0 & 0 & \cdots
\\
\vp_{k1} & \vp_{k0} & 0 & 0 & \cdots
\\
\vp_{k 2} & \vp_{k 1} & \vp_{k 0} & 0 & \cdots
\\
\vdots&\vdots&\vdots&\vdots&\ddots
\end{bmatrix},
\]
is a Toeplitz operator on $l^2$ for all $k \geq 0$. More specifically, we have
\[
M_\vp \tau = \tau T_{\vp} \qquad (\vp \in H^\infty(\D^2)),
\]
where $M_\vp$ denotes the multiplication operator on $H^2(\D^2)$ with analytic symbol $\vp$, that is, $M_\vp f = \vp f$ for all $f \in H^2(\D^2)$. Indeed, if $\vp = \sum\limits_{i=0}^{\infty}(\sum\limits_{j=0}^{\infty} \vp_{ij} z_2^j) z_1^i$ and $\{a_{ij}\} \in \oplus l^2$, then
\[
\begin{split}
M_{\vp} \tau (\{a_{ij}\}) &= \left( \sum_{i,j =0}^{\infty} \vp_{ij} z_1^iz_2^j \right)
\left( \sum_{k, l=0}^{\infty} a_{kl} z_1^k z_2^l \right)
\\
& = \sum_{j, l=0}^{\infty} \; \sum_{i, k=0}^{\infty} \vp_{ij}  a_{kl} z_1^{i+k}z_2^{l+j}
\\
& = \sum_{j, l=0}^{\infty} \left\{ \sum_{i=0}^{\infty} \left(\sum_{k=0}^i \vp_{i-k, j} a_{kl}\right) z_1^i \right\} z_2^{l+j}
\\
& = \sum_{i=0}^{\infty} \sum_{k=0}^i \left\{ \sum_{j, l=0}^{\infty} \vp_{i-k, j} a_{kl} z_2^{l+j}  \right\} z_1^i
\\
& = \sum_{i=0}^{\infty} \sum_{k=0}^i \left\{\sum_{j=0}^{\infty} \left(\sum_{l=0}^j \vp_{i-k, j-l} a_{kl}\right) z_2^j \right\} z_1^i
\\
& = \sum_{i, j=0}^{\infty} \left( \sum_{k, l=0}^{i,j} \vp_{i-k, j-l} a_{kl} \right)  z_1^iz_2^j,
\end{split}
\]
and hence $M_{\vp} \tau (\{a_{ij}\}) = \tau T_\vp (\{a_{ij}\})$. In particular, we have
\[
T_{z_2} = \begin{bmatrix}
S_{l^2} &0&0&0&\cdots\\
0 & S_{l^2} &0&0&\cdots\\
0 & 0 & S_{l^2} &0&\cdots\\
\vdots&\vdots&\vdots&\vdots&\ddots
\end{bmatrix},
\]
where $S_{l^2}$ denotes the shift on $l^2$, that is, $S_{l^2}(\{a_0, a_1, \ldots \}) = \{0, a_0, a_1, \ldots\}$ for all $\{a_m\}_{m\geq 0} \in l^2$. Continuing with the above notation, we set
\begin{equation}\label{eqn: Y_0 and Y_j}
Y_0=\begin{bmatrix}
\Phi_0 \\ \Phi_1 \\ \Phi_2 \\ \vdots
\end{bmatrix}, \quad \mbox{and} \quad Y_j = S^jY_0,
\end{equation}
for all $j \geq 1$. Then $T_\vp = \begin{bmatrix} Y_0 & Y_1 & Y_2 & \ldots \end{bmatrix}$. Since $T_\vp^* T_\vp = (Y_i^* Y_j)$, it follows that $M_{\vp}$ on $H^2(\D^2)$ is an isometry if and only if $T_{\vp}$ on $\oplus l^2$ is an isometry, which is also equivalent to
\begin{equation}\label{eqn: YiYi*=delta I}
Y_i^* Y_j = \delta_{ij} I_{l^2}.
\end{equation}
We are now ready to present the main theorem of this section.

\begin{thm}\label{thm-inner new}
Let $\vp\in \cls(\D^2)$. If $\vp = \tau_V$ for some isometric colligation
\[
V=\begin{bmatrix}
a&B_1&B_2\\
C_1&D_{1}&D_{2}\\
C_2&0&D_{3}
\end{bmatrix}
:\mathbb{C} \oplus (\mathcal{H}_1\oplus\mathcal{H}_2) \rightarrow \mathbb{C} \oplus (\mathcal{H}_1\oplus\mathcal{H}_2),
\]
with $D_1, D_3 \in C_{0\cdot}$, then $\vp$ is an inner function.
\end{thm}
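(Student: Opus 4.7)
The plan is to reduce the claim to the single statement that $\|\vp\|_{H^2(\D^2)}^2 = 1$. Since $\vp \in \cls(\D^2)$ we have $\|\vp\|_\infty \leq 1$, hence $|\vp^*| \leq 1$ almost everywhere on $\mathbb{T}^2$; combined with $\|\vp\|_{H^2(\D^2)}^2 = \int_{\mathbb{T}^2}|\vp^*|^2 \, dm$, this shows that $\vp$ is inner if and only if $\|\vp\|_{H^2(\D^2)}^2 = 1$. By Parseval, $\|\vp\|_{H^2(\D^2)}^2 = \sum_{i,j\geq 0}|\vp_{ij}|^2$, so it suffices to compute this double sum from the colligation data and show it equals $1$.

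To do this, I would exploit the block upper triangular structure of $D$. Inverting $I - E_{\clh_1 \oplus \clh_2}(\z) D$ block-wise and expanding the resulting resolvents as Neumann series gives
\[
\vp(\z) = a + z_1 B_1(I - z_1 D_1)^{-1} C_1 + z_2 B_2(I - z_2 D_3)^{-1} C_2 + z_1 z_2 B_1(I - z_1 D_1)^{-1} D_2 (I - z_2 D_3)^{-1} C_2,
\]
from which one reads off $\vp_{0,0} = a$, $\vp_{i,0} = B_1 D_1^{i-1} C_1$, $\vp_{0,j} = B_2 D_3^{j-1} C_2$, and $\vp_{i,j} = B_1 D_1^{i-1} D_2 D_3^{j-1} C_2$ for $i,j \geq 1$. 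The condition $V^*V = I$ then decodes into three identities that drive the rest of the argument: $|a|^2 + \|C_1\|^2 + \|C_2\|^2 = 1$ (top-left block), $B_1^* B_1 + D_1^* D_1 = I_{\clh_1}$ (middle block), and $B_2^* B_2 + D_2^* D_2 + D_3^* D_3 = I_{\clh_2}$ (bottom block).

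The main step is a two-stage telescoping. Substituting $B_1^* B_1 = I_{\clh_1} - D_1^* D_1$ and summing over $i \geq 1$ for each fixed $j$ collapses to
\[
\sum_{i \geq 1} |\vp_{i,j}|^2 = \|Z_j\|^2 - \lim_{N \to \infty}\|D_1^N Z_j\|^2 = \|Z_j\|^2,
\]
where $Z_0 = C_1$ and $Z_j = D_2 D_3^{j-1} C_2$ for $j \geq 1$, the tail vanishing precisely because $D_1 \in C_{0\cdot}$. The $j = 0$ column thus contributes $|\vp_{0,0}|^2 + \sum_{i \geq 1}|\vp_{i,0}|^2 = |a|^2 + \|C_1\|^2$ to $\|\vp\|_{H^2}^2$. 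For each $j \geq 1$, adding $|\vp_{0,j}|^2 = C_2^*D_3^{*(j-1)}B_2^*B_2 D_3^{j-1}C_2$ and invoking $B_2^* B_2 + D_2^* D_2 = I_{\clh_2} - D_3^* D_3$ collapses the $j$-th column to $\|D_3^{j-1} C_2\|^2 - \|D_3^j C_2\|^2$, which telescopes via $D_3 \in C_{0\cdot}$ to $\|C_2\|^2$ after summing over $j \geq 1$. Combining these contributions with the top-left isometry identity yields $\|\vp\|_{H^2}^2 = |a|^2 + \|C_1\|^2 + \|C_2\|^2 = 1$, so $\vp$ is inner. The only subtle point is bookkeeping the two successive telescopings; every summand is nonnegative so interchanges of summation are automatic by Tonelli, and the $C_{0\cdot}$ hypotheses on $D_1$ and $D_3$ enter exactly once each to kill the respective remainder terms.
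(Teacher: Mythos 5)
Your proof is correct, and it is a genuine shortcut relative to the paper's argument. The paper proves the stronger operator fact that $T_\vp$ (equivalently $M_\vp$ on $H^2(\D^2)$) is an isometry, which forces it to verify $y_0=1$, $y_k=0$ for $k\ge1$, and $c_m=0$ for all $m\in\mathbb{Z}$ — three separate families of identities obtained from $V^*V=I$. You instead observe that for $\vp\in\cls(\D^2)$, with $|\vp^*|\le 1$ a.e.\ on $\mathbb{T}^2$, innerness is equivalent to the single scalar condition $\|\vp\|_{H^2(\D^2)}^2=\int_{\mathbb{T}^2}|\vp^*|^2\,dm=1$; and this quantity is precisely the $(0,0)$ entry $y_0$ of the paper's matrix $Y_0^*Y_0=\sum_m\Phi_m^*\Phi_m$. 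Your two-stage telescoping (first in $i$ using $B_1^*B_1=I-D_1^*D_1$ with $D_1\in C_{0\cdot}$ to reduce each column to $\|Z_j\|^2$, then in $j$ using $B_2^*B_2+D_2^*D_2=I-D_3^*D_3$ with $D_3\in C_{0\cdot}$) is exactly the paper's computation of $y_0$, carried out in a slightly different bookkeeping order, ending with the top-left identity $|a|^2+\|C_1\|^2+\|C_2\|^2=1$. So the two proofs share the same ingredients — the coefficient formulas extracted from the block-triangular resolvent, the three diagonal blocks of $V^*V=I$, and the $C_{0\cdot}$ hypothesis to kill the telescoping tails — but yours skips the $y_k$ ($k\ge1$) and $c_m$ computations entirely. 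What the paper's extra work buys is the explicit isometry statement for $M_\vp$, which is of independent interest; for the stated theorem, however, your observation that $\|M_\vp 1\|_{H^2}=1$ already suffices is a clean and legitimate simplification.
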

\begin{proof} Since $\vp=\tau_V$, and
\[
\tau_V(\z) = a + \begin{bmatrix} B_1 & B_2 \end{bmatrix} \Big(I_{\clh_1 \oplus \clh_2} - E_{\clh_1 \oplus \clh_2}(\z) \begin{bmatrix} D_{1}&D_{2}\\ 0&D_{3}\end{bmatrix}\Big)^{-1} E_{\clh_1 \oplus \clh_2}(\z) \begin{bmatrix}C_1\\C_2\end{bmatrix},
\]
we have
\[
\vp(\z) = a + \sum\limits_{i=1}^{\infty}B_1D_1^{i-1} C_1 z_1^{i} + \sum\limits_{j= 1}^{\infty}B_2 D_3^{j-1}C_2z_2^{j} + \sum\limits_{i= 1}^{\infty}\sum\limits_{j= 1}^{\infty}B_1 D_{1}^{i-1} D_{2} D_{3}^{j-1} C_2 z_1^{i} z_2^{j},
\]
for all $\z \in \D^2$. Under the same notations preceding the statement, we set
\[
\Phi_0=\begin{bmatrix}
a&0&0&0&\cdots
\\
B_2 C_2 & a & 0 & 0 & \cdots
\\
B_2 D_{3} C_2 & B_2 C_2&a&0&\cdots
\\
B_2 D_{3}^2C_2& B_2 D_{3}C_2 & B_2 C_2&a&\cdots
\\
\vdots&\vdots&\vdots&\vdots&\ddots
\end{bmatrix},
\]
and
\[
\Phi_j=\begin{bmatrix}
B_1 D_{1}^{j-1} C_1 & 0& 0&0&\cdots
\\
B_1 D_{1}^{j-1}D_{2}C_2& B_1 D_{1}^{j-1} C_1&0&0&\cdots
\\
B_1 D_{1}^{j-1} D_{2} D_{3} C_2& B_1 D_{1}^{j-1} D_{2}C_2&B_1D_{1}^{j-1}C_1&0&\cdots
\\
B_1 D_{1}^{j-1} D_{2} D_{3}^2C_2 & B_1 D_{1}^{j-1} D_{2}D_{3}C_2 & B_1 D_{1}^{j-1} D_{2} C_2 & B_1 D_{1}^{j-1} C_1&\cdots
\\
\vdots&\vdots&\vdots&\vdots&\ddots
\end{bmatrix},
\]
for $j\geq 1$. We first claim that $Y_0=\begin{bmatrix}
\Phi_0\\
\Phi_1\\
\Phi_2\\
\vdots
\end{bmatrix}$ is an isometry. In fact, since $Y_0^*Y_0 = \sum\limits_{m=0}^\infty \Phi_m^* \Phi_m$, there exists a sequence of scalars $\{y_m\}_{m\geq 0}$ such that
\[
Y_0^*Y_0 = \begin{bmatrix}
y_0&y_1&y_2&\cdots\\
\overline{y_1}&y_0&y_1&\cdots\\
\overline{y_2}&\overline{y_1}&y_0&\cdots\\
\vdots&\vdots&\vdots&\ddots
\end{bmatrix}.
\]	
We need to show that $y_0 =1$ and $y_k = 0$ for all $k \geq 1$. Note that
\[
\begin{split}
y_0 & = |a|^2 + C_1^*\left(\sum\limits_{j=0}^\infty D_1^{*j} B_1^* B_1 D_1^j \right) C_1
\\
& \quad + C_2^*\left[\sum\limits_{k = 0}^\infty D_3^{*k} \bigg\{ B_2^*B_2 + D^*_2 \left(\sum\limits_{l\geq 0}D_1^{*l} B_1^* B_1 D_1^l \right) D_2 \bigg\} D_3^k \right]C_2.
\end{split}
\]
Since $V^* V = I$, it follows that
\begin{equation}\label{eq-VstarV}
\begin{bmatrix} |a|^2 + C_1^* C_1 + C_2^* C_2 & \bar{a} B_1 + C_1^* D_1 & \bar{a} B_2 + C_1^* D_2 + C_2^* D_3
\\
a B_1^* + D_1^* C_1 & B_1^* B_1 + D_1^* D_1 & B_1^* B_2 + D_{1}^* D_2
\\
a B_2^* + D_2^* C_1 + D_3^* C_2 & B_2^* B_1 + D_2^*D_1 & B_2^* B_2 + D_2^* D_2 + D_3^* D_3
\end{bmatrix} = I.
\end{equation}
In particular
\[
\begin{split}
I & = B_1^* B_1 + D_1^* D_1
\\
& = B_1^* B_1 + D_1^* (B_1^* B_1 + D_1^* D_1) D_1
\\
& = B_1^* B_1 + D_1^* B_1^* B_1 D_1 + D_1^{*2} D_1^2,
\end{split}
\]
and hence $I = \sum\limits_{j = 0}^m D_{1}^{*j}(B_1^*B_1)D_{1}^j + D_{1}^{*(m+1)}D_{1}^{(m+1)}$ for all $m \geq 1$. Using the fact that $D_1 \in C_{0\cdot}$, we have
\begin{equation}\label{eqn: infinte sum 1 =1}
\sum\limits_{j = 0}^\infty D_1^{*j}(B_1^*B_1) D_1^j = I,
\end{equation}
in the strong operator topology. Similarly, $B_2^* B_2 + D_2^* D_2 + D_3^* D_3 = I$ and $D_3 \in C_{0\cdot}$ implies that
\begin{equation}\label{eqn: infinte sum 2 =1}
\sum\limits_{j=0}^\infty D_{3}^{*j}(B_2^*B_2+D_2^*D_2)D_{3}^j = I,
\end{equation}
in the strong operator topology. This with the condition $|a|^2 + C_1^*C_1 + C_2^*C_2 = 1$ in \eqref{eq-VstarV} implies that
\[
y_0 = |a|^2 + C_1^*C_1 + C_2^*C_2 = 1.
\]
Next we consider
\[
\begin{split}
y_1 & = aC_2^* B_2^* + C_2^*D_{2}^*\left(\sum\limits_{j= 0}^\infty D_1^{*j} B_1^*B_1 D_1^j\right)C_1
\\&\quad +C_2^*D_3^* \left[\sum\limits_{k = 0}^\infty D_3^{*k}\bigg\{ B_2^*B_2 + D^*_2 \left(\sum\limits_{l=0}^\infty D_1^{*l}B_1^*B_1 D_1^l\right)D_2 \bigg\}D_3^k \right]C_2 \,.
\end{split}
\]
Thus by \eqref{eqn: infinte sum 1 =1} and \eqref{eqn: infinte sum 2 =1}, it follows that
\[
y_1 = a C_2^* B_2^* + C_2^*D_2^*C_1+C_2^*D_3^* C_2 = C_2^*(aB_2^*+D_2^*C_1 + D_3^* C_2) =0,
\]
as $aB_2^*+D_2^*C_1 + D_3^* C_2 = 0$ follows from \eqref{eq-VstarV}. Similarly
\[
y_j = C_2^*D_3^{*j} (aB_2^* + D_2^* C_1 + D_3^* C_2)=0,
\]
for all $j\geq2$. This proves that $Y_0$ is an isometry.

\NI Since the shift $S$ on $\oplus l^2$ is an isometry, $Y_j :=S^jY_0$, $j\geq1$, is also an isometry (see the construction in \eqref{eqn: Y_0 and Y_j}). Our final goal is to prove that $T_\vp := \begin{bmatrix} Y_0 & Y_1 & Y_2 & \ldots \end{bmatrix}$ is an isometry, or equivalently, by virtue of \eqref{eqn: YiYi*=delta I} and $Y_m^* Y_m = I$ for all $m \geq 0$,
\[
Y_p^* Y_q = 0 \quad \quad (p>q \geq 0).
\]
Since $Y_p^* Y_q = Y_0^* S^{*p} S^q Y_0 = Y_0^* S^{*(p-q)} Y_0$ for all $p>q \geq 0$, it actually suffices to check that
\[
Y_0^* S^{*(j+1)} Y_0 = 0 \qquad\qquad (j \geq 0).
\]
So we fix $j \geq 0$ and observe
\[
S^jY_0=\left[
\begin{array}{ccccc}
\onegroup& \Phi_0 & \Phi_1 & \Phi_2 & \ldots
\end{array}
\right]^t.
\]
Hence
\[
Y_0^* S^{*(j+1)} Y_0 = \Phi_0^* \Phi_{j+1}+ \Phi_1^* \Phi_{j+2} + \cdots.
\]
Therefore there exists a sequence $\{c_m\}_{m \in \mathbb{Z}}$ such that
\[
Y_0^* S^{*(j+1)} Y_0 = \begin{bmatrix}
c_0&c_1&c_2&\cdots
\\
c_{-1}&c_0&c_1&\cdots
\\
c_{-2}&c_{-1}&c_{0}&\cdots
\\
\vdots&\vdots&\vdots&\ddots
\end{bmatrix}.
\]
It is suffices to prove that $c_k = 0$ for all $k \in \mathbb{Z}$. A simple calculation shows that
\[
c_0 = (\bar{a}B_1+C_1^*D_1)D_1^{j+1}C_1 + C_2^* \left[\sum\limits_{m=0}^\infty D_3^{*m} \bigg\{B_2^*B_1  +D_2^*D_1 \bigg\} D_1^{j+1} D_2 D_3^m \right]C_2.
\]
By \eqref{eq-VstarV}, $\bar{a}B_1+C_1^*D_1 = 0$ and $B_2^*B_1  +D_2^*D_1 = 0$, and hence $c_0 = 0$. Now let $k >0$. Then
\[
c_k = C_2^*D_3^{*(k-1)} (B_2^*B_1 + D_2^* D_1) D_1^{j+1} C_1 + C_2^* D_3^{*k} \left[ \sum\limits_{m= 0}^\infty D_3^{*m} \bigg\{B_2^*B_1 +D_2^*D_1 \bigg\}D_1^{j+1}D_2 D_3^m \right]C_2,
\]
and hence $c_k = 0$. Finally, since
\[
c_{-k} = (\overline{a}B_1 + C_1^*D_1) D_1^{j+1} D_2 D_3^{k-1} C_2 + C_2^*\left[ \sum\limits_{m = 0}^\infty D_3^{*m} \bigg\{B_2^*B_1 + D_2^*D_{1,1} \bigg\}D_1^{j+1}D_2 D_3^m \right] D_3^k C_2,
\]
it again follows that $c_{-k} = 0$. This implies that $T_{\vp}$ or,
equivalently, $M_{\vp}$ is an isometry, and completes the proof.
\end{proof}

\begin{rem}\label{rem: D C dot zero}
Let $V \in \clb(\mathbb{C} \oplus \clh)$ be an isometric colligation, and let $\clh = \clh_1 \oplus \clh_2$ for some Hilbert spaces $\clh_1$ and $\clh_2$. Suppose $D:= P_{\clh} V|_{\clh}$ and suppose that $D \clh_1 \subseteq \clh_1$. Set
\[
D=\begin{bmatrix} D_1&D_2\\0&D_3\end{bmatrix}  \in \clb(\clh_1 \oplus \clh_2).
\]
It is easy to see that if $D \in C_{0 \cdot}$, then $D_1$ and $D_3$ are also in $C_{0 \cdot}$. Consequently, Theorem \ref{thm-inner new} also holds for those Schur functions $\vp$ such that $\vp = \tau_V$ with $V$ as above. Of course, if $D_1$ and $D_3$ are in $C_{0 \cdot}$, then $D$ is not necessarily in $C_{0 \cdot}$.
\end{rem}

\newsection{de Branges-Rovnyak kernels}\label{section: de Branges-Rovnyak}

The goal of this section is to study de Branges-Rovnyak kernels on $\D^n$ and the open unit ball of $\mathbb{C}^n$, $n \geq 1$. Specifically, we seek characterizations of analytic kernels that admit certain factorizations involving Schur(-Agler) functions. Our investigation is partly motivated by a classical result of de Branges and Rovnyak (see the paragraph preceding Theorem \ref{cor: de Branges NF} for more details).

We start with the unit disc case. Let $\cls(\D, \clb(\cle,\cle_*))$ denote the set of all $\clb(\cle,\cle_*)$-valued analytic functions $\Theta$  on $\D$ such that $\sup_{z \in \D}\|\Theta(z)\| \leq 1$. Such functions are called \textit{operator-valued Schur functions}.

A kernel $K : \D \times \D \raro \clb(\cle)$ is a \textit{de Branges-Rovnyak kernel} if there exists $\Theta\in\cls(\D, \clb(\cle,\cle_*))$ such that
\[
K(z,w) = K_{\Theta}(z,w):=\frac{I-\Theta(z)\Theta(w)^*}{1-z\bar{w}} \qquad (z,w\in \D).
\]
Note that if $\Theta \in \cls(\D, \clb(\cle,\cle_*))$, then $M_\Theta : H^2_{\cle}(\D) \raro H^2_{\cle_*}(\D)$ is a contraction. This is clearly equivalent to the condition that $K_{\Theta} \geq 0$.

In the following, we characterize de Branges-Rovnyak kernels defined on the disc $\D$. The proof uses the commonly used ``lurking-isometry'' techniques. Therefore, our proof is fairly standard and, perhaps, it can also be achieved from existing results of Schur(-Agler) functions \cite{AM}. Note also that the theorem below does not assume a priori that $K$ is analytic in its first variable.

\begin{thm}\label{thm: de Branges D}
Let $K: \D \times \D \raro \clb(\cle_*)$ be a kernel on $\D$. Then $K=K_{\Theta}$  for some $\Theta \in \cls(\D,\clb(\cle,\cle_*))$ and Hilbert space $\cle_*$ if and only if
\[
I_{\cle_*} - (1-z\bar{w}) \cdot K \geq 0.
\]	
\end{thm}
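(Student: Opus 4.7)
\textbf{Necessity} is immediate: if $K = K_\Theta$ with $\Theta \in \cls(\D, \clb(\cle, \cle_*))$, then
\[
I_{\cle_*} - (1-z\bar{w}) K_\Theta(z, w) = \Theta(z) \Theta(w)^*,
\]
which is a Kolmogorov-decomposable (hence positive) kernel.

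For the \textbf{sufficiency} I would use the classical lurking-isometry technique. Setting $L := I_{\cle_*} - (1-z\bar{w}) K$, the hypothesis $L \geq 0$ rearranges to $K(z, w) + L(z, w) = I_{\cle_*} + z\bar{w}\, K(z, w)$. Testing positivity of $L$ against finite sums $\sum_i K(\cdot, z_i)\xi_i$ in the reproducing kernel Hilbert space $\clh_K$ of $K$ (which is defined even without any analyticity assumption on $K$) yields
\[
\Big\|\sum_i K(\cdot, z_i)\xi_i\Big\|^2_{\clh_K} \leq \Big\|\sum_i \bar{z}_i K(\cdot, z_i)\xi_i\Big\|^2_{\clh_K} + \Big\|\sum_i \xi_i\Big\|^2_{\cle_*}.
\]
Equivalently, the prescription $V\big(\sum_i \bar{z}_i K(\cdot, z_i)\xi_i,\, \sum_i \xi_i\big) := \sum_i K(\cdot, z_i)\xi_i$ is a well-defined contraction from a subspace of $\clh_K \oplus \cle_*$ into $\clh_K$; extending by zero gives a contraction $V: \clh_K \oplus \cle_* \to \clh_K$. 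A Julia-type (defect-operator) dilation of $V$ then produces an isometric colligation
\[
\tilde V = \begin{bmatrix} A & B \\ C & D \end{bmatrix}: \clh_K \oplus \cle_* \to \clh_K \oplus \cle,
\]
for some auxiliary Hilbert space $\cle$.

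The top row of $\tilde V$ applied to the one-term vector $(\bar{z} K(\cdot, z)\xi, \xi)$ reads $\bar{z} A K(\cdot, z)\xi + B\xi = K(\cdot, z)\xi$, so
\[
K(\cdot, z)\xi = (I_{\clh_K} - \bar{z} A)^{-1} B \xi,
\]
which in particular shows $K$ is analytic in its first variable (via Hermitian symmetry), a fact not postulated a priori. I then set
\[
\Theta(z) := D^* + z B^* (I_{\clh_K} - z A^*)^{-1} C^* \in \clb(\cle, \cle_*),
\]
the transfer function of the coisometric colligation $\tilde V^*$; the disc realization theorem (Theorem \ref{thm-Agler D2} in the one-variable case) guarantees $\Theta \in \cls(\D, \clb(\cle, \cle_*))$. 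Finally, $K = K_\Theta$ is read off from the isometry identity: polarizing $\|\tilde V(\bar{z} K(\cdot, z)\xi, \xi)\|^2 = \|(\bar{z} K(\cdot, z)\xi, \xi)\|^2$ between $(z,\xi)$ and $(w,\eta)$, and observing that the second row of $\tilde V$ sends $(\bar{z} K(\cdot, z)\xi, \xi)$ to $\Theta(z)^*\xi$, one obtains
\[
(1 - w\bar{z}) K(w, z) = I_{\cle_*} - \Theta(w) \Theta(z)^*,
\]
i.e., $K = K_\Theta$. I expect the main obstacle to be bookkeeping: aligning the spaces in $\tilde V$ with the conventional Schur colligation (after passing to $\tilde V^*$) so that $\Theta$ is genuinely analytic and Schur, and tracking the scalar factor $(1 - w\bar{z})$ through the isometry identity with the correct conjugation pattern.
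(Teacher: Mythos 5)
Your proof is correct and follows essentially the same lurking-isometry route as the paper. The only difference is bookkeeping: the paper factors $K = GG^*$ and $I - (1-z\bar w)K = FF^*$ via Kolmogorov decompositions and then extends the resulting partial isometry by enlarging the state space, whereas you set up the lurking contraction directly in $\clh_K$ and produce the auxiliary space $\cle$ by a defect-operator (Julia) dilation; these are equivalent ways of organizing the same argument.
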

\begin{proof}
If $K=K_{\Theta}$, then
\[
I_{\cle_*}-(1-z\bar{w})K(z,w) = \Theta(z)\Theta(w)^* \geq 0 \qquad (z, w \in \D).
\]
Conversely, if $I_{\cle_*}-(1-z\overline{w}) \cdot K\geq 0$, then there exist a Hilbert space $\clf$ and a function (a priori not necessarily analytic) $F: \D \rightarrow \clb(\clf,\cle_*)$ such that
\[
I_{\cle}-(1-z\bar{w}) K(z, w) = F(z)F(w)^* \qquad (z, w \in \D).
\]
Clearly, $F$ is a contractive function on $\D$. Again, since $K\geq 0$, there exist a Hilbert space $\clg$ and a function $G : \D \rightarrow \clb(\clg,\cle_*)$ such that $K(z,w)=G(z)G(w)^*$, $z, w \in \D$. Then
\[
I_{\cle_*} - G(z)G(w)^* + z\bar{w}G(z)G(w)^* = F(z)F(w)^*,
\]
and hence
\[
I_{\cle_*}+z \bar{w}G(z)G(w)^* = G(z)G(w)^* + F(z)F(w)^*,
\]
for all $z, w \in \D$. Therefore
\[
V:\begin{bmatrix}
I_{\cle_*}\\ \bar{w}G(w)^*\end{bmatrix}\eta \mapsto \begin{bmatrix}F(w)^*\\G(w)^*
\end{bmatrix}\eta \qquad \qquad (w \in \D, \eta\in \cle_*),
\]
defines an isometry from a subspace of $\cle_*\oplus\clg$ to $\clf\oplus\clg$. Then, adding an infinite-dimensional summand to $\clg$ if necessary, $V$ can then be extended to an isometry, denoted by $V$ again, from $\cle_*\oplus\clg$ to $\clf\oplus\clg$. Set
\[
V = \begin{bmatrix}A&B\\C&D \end{bmatrix}: \cle_* \oplus \clg \rightarrow \clf \oplus \clg.
\]
Then
\[
\begin{bmatrix}A&B\\C&D \end{bmatrix} \begin{bmatrix}
\eta \\ \bar{w}G(w)^* \eta \end{bmatrix} = \begin{bmatrix}F(w)^*\eta \\ G(w)^* \eta
\end{bmatrix},
\]
for all $\eta\in \cle$ and $w \in \D$, which implies that
\[
A+\bar{w}BG(w)^*=F(w)^* \;\mbox{and} \; C+\bar{w}DG(w)^*=G(w)^*,
\]
for all $w \in \D$. The latter equality implies that $G(w)^*=(I-\overline{w}D)^{-1}C$, and hence, the first equality yields
\[
F(w)^*= A+\overline{w}B(I-\overline{w}D)^{-1}C,
\]
for all $w \in \D$. Hence
\[
F(z)=A^*+zC^*(I-zD^*)^{-1}B^*\qquad (z\in \D),
\]
that is, $F = \tau_{V^*}$ is analytic on $\D$ and bounded by $1$, where
\[
V^* = \begin{bmatrix}A^*& C^* \\ B^* & D^*\end{bmatrix},
\]
is a co-isometric colligation. Consequently, $\Theta:= F \in \cls(\D, \clb(\clf,\cle_*))$, and hence
\[
I_{\cle_*}-(1-z \bar{w}) K(z,w) = \Theta(z)\Theta(w)^*,
\]
that is, $K(z,w)=\dfrac{I_{\cle_*}-\Theta(z)\Theta(w)^*}{1-z\bar{w}}$ for all $z, w \in \D$. This completes the proof.
\end{proof}

We denote by $\mathbb{S}_n$ the Szeg\"{o} kernel on $\D^n$, that is
\[
\mathbb{S}_n(\z, \w) = \prod_{i=1}^{n}\frac{1}{1 - z_i \bar{w}_i} \qquad (\z, \w \in \D^n).
\]
Also we denote $\mathbb{S}_1$ simply by $\mathbb{S}$. The following is a variation of a result due to de Branges and Rovnyak \cite{dBR1, dBR2}. Also, we refer the reader to the classic Sz.-Nagy and Foias \cite[Section 8, page 231]{NF} for detailed proof and some historical notes. The proof below follows the proof of the previous theorem. Again, a priori we do not assume (in contrast to Sz.-Nagy and Foias) that $K$ is analytic in its first variable.

\begin{thm}\label{cor: de Branges NF}
Let $K:\D\times\D\rightarrow \clb(\cle_*)$ be a kernel. Then
\[
0\leq K \leq \mathbb{S} \text{ and }  \mathbb{S}^{-1} \cdot K\geq 0,
\]
if and only if there exist a Hilbert space $\cle$ and an operator-valued Schur function $\Theta\in \cls(\D, \clb(\cle, \cle_*))$ such that
\[
K(z,w)=\frac{\Theta(z)\Theta(w)^*}{1-z\bar{w}} \qquad (z, w \in \D).
\]
 \end{thm}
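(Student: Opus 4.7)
My plan is to reduce the statement to Theorem \ref{thm: de Branges D} via the substitution
\[
K''(z,w) := \mathbb{S}(z,w) I_{\cle_*} - K(z,w).
\]
The two hypotheses on $K$ translate exactly into the two hypotheses on $K''$ required by Theorem \ref{thm: de Branges D}, and the conclusion for $K''$ rearranges algebraically into the desired representation of $K$; no new lurking-isometry argument needs to be carried out.

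For the forward implication, suppose $K(z,w) = \Theta(z)\Theta(w)^*/(1-z\bar{w})$ for some $\Theta \in \cls(\D,\clb(\cle,\cle_*))$. Then $\mathbb{S}^{-1}\cdot K = \Theta(z)\Theta(w)^* \geq 0$, and $K$ is the Hadamard product of the positive kernels $\mathbb{S}$ and $\Theta(z)\Theta(w)^*$, hence $K \geq 0$. Finally,
\[
\mathbb{S}(z,w) I_{\cle_*} - K(z,w) = \frac{I_{\cle_*} - \Theta(z)\Theta(w)^*}{1-z\bar{w}} = K_\Theta(z,w) \geq 0,
\]
since $\Theta$ is a Schur function (equivalently, $M_\Theta$ is a contraction), giving $K \leq \mathbb{S}$.

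For the converse, the hypotheses $K \leq \mathbb{S}$ and $\mathbb{S}^{-1}\cdot K \geq 0$ translate, respectively, into $K'' \geq 0$ and
\[
I_{\cle_*} - (1-z\bar{w}) \cdot K''(z,w) = (1-z\bar{w})\, K(z,w) \geq 0.
\]
Applying Theorem \ref{thm: de Branges D} to $K''$ then yields a Hilbert space $\cle$ and a Schur function $\Theta \in \cls(\D,\clb(\cle,\cle_*))$ with $K'' = (I_{\cle_*}-\Theta(z)\Theta(w)^*)/(1-z\bar{w})$. Rearranging,
\[
K(z,w) = \mathbb{S}(z,w) I_{\cle_*} - K''(z,w) = \frac{\Theta(z)\Theta(w)^*}{1-z\bar{w}},
\]
as required.

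The only substantive work in the whole statement — extracting an analytic, contractive $\Theta$ from a kernel that is not a priori analytic in its first variable — has already been carried out in Theorem \ref{thm: de Branges D} via the lurking-isometry extension. That is where the main difficulty genuinely lies; everything particular to the present theorem is a one-line algebraic manipulation. If one preferred a self-contained proof mirroring the previous one (as the authors' remark hints), the natural obstacle would again be exactly this analyticity step: one would factor $(1-z\bar{w})K(z,w) = F(z)F(w)^*$ and $K(z,w) = G(z)G(w)^*$ via the reproducing kernel Hilbert space constructions, assemble a partial isometry $V$ from the rearranged identity $I_{\cle_*} + z\bar w\, G(z)G(w)^* = G(z)G(w)^* + F(z)F(w)^*$, extend to a full isometry, and read off $\Theta$ from the transfer function of the resulting colligation.
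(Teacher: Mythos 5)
Your proof is correct and in substance coincides with the paper's: both hinge on the same two factorizations, of $\mathbb{S}I_{\cle_*}-K$ and of $\mathbb{S}^{-1}\cdot K$, and the same lurking-isometry step, which the paper re-runs in sketch form. Your substitution $K'' := \mathbb{S}I_{\cle_*} - K$ is a tidier packaging that exhibits the theorem as a genuine corollary of Theorem \ref{thm: de Branges D} rather than a parallel argument; it also makes plain that the hypothesis $K\geq 0$ is redundant, since $\mathbb{S}^{-1}\cdot K\geq 0$ together with $\mathbb{S}\geq 0$ already forces $K\geq 0$ via the Schur product theorem.
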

 \begin{proof}
Suppose $0\leq K \leq \mathbb{S}$ and $\mathbb{S}^{-1} \cdot K\geq 0$. Now $0\leq K \leq \mathbb{S}$ implies that
\[
\frac{1}{1 - z \bar{w}}I - K(z,w)\geq 0.
\]
As in the proof of the previous theorem, there exist a Hilbert space $\clf$ and a function $G: \D \raro \clb(\clf, \cle_*)$ such that
\[
I-(1-z \bar{w})K(z,w) = (1-z\bar{w}) G(z)G(w)^*.
\]
Again, since $\mathbb{S}^{-1} \cdot K\geq 0$, there exist a Hilbert space $\clg$ and a function $F : \D \raro \clb(\clg, \cle_*)$ such that
\[
I-F(z)F(w)^* = (1-z\bar{w})G(z)G(w)^*
\]
The remaining argument is similar to that of the proof of the previous theorem.
\end{proof}

Now we recall the definition of Schur-Agler functions. Let $\cle$ and $\cle_*$ be Hilbert spaces. The \textit{Schur-Agler class} $\mathcal{SA}(\D^n,\clb(\cle,\cle_*))$ \cite{JA1} consists of $\clb(\cle,\cle_*)$-valued analytic functions $\vp$ on $\D^n$ such that $\vp$ satisfies the $n$-variables von Neumann inequality
\[
\|\vp(T_1, \ldots, T_n)\|_{\clb(\clh)} \leq 1,
\]
for any $n$-tuples of commuting strict contractions on a Hilbert space $\clh$. Here
\[
\vp(T_1, \ldots, T_n) = \sum_{\bm{k} \in \mathbb{Z}_+^n} \vp_{\bm{k}} \otimes T^{\bm{k}},
\]
where $\vp = \sum\limits_{\bm{k} \in \mathbb{Z}_+^n} \vp_{\bm{k}}\z^{\bm{k}}$, $\vp_{\bm{k}} \in \clb(\cle, \cle_*)$, and $T^{\bm{k}} = T_1^{k_1} \cdots T_n^{k_n}$ for all $\bm{k} = (k_1, \ldots, k_n) \in \mathbb{Z}_+^n$. The elements of $\mathcal{SA}(\D^n,\clb(\cle,\cle_*))$  are called \textit{Schur-Agler functions}. The following result is due to Agler \cite{JA1} (also see Theorem \ref{thm-Agler D2}):

\NI Given a function $\Theta: \D^n \raro \clb(\cle,\cle_*)$, the following are equivalent:

\NI (i) $\Theta \in \mathcal{SA}(\D^n, \clb(\cle,\cle_*))$.

\NI (ii) There exist $\clb(\cle_*)$-valued kernels $K_1, \ldots, K_n$ (known as \textit{Agler kernels}) on $\D^n$ such that
\[
I_{\cle_*} - \Theta(\z) \Theta(\w)^* = \sum_{i=1}^n (1 - z_i \bar{w}_i) K_i(\z, \w), \qquad (\z, \w \in \D^n).
\]

We now turn to de Branges-Rovnyak kernels on $\D^n$. Suppose $\Theta \in \mathcal{SA}(\D^n, \clb(\cle,\cle_*))$. Since $M_{\Theta}$ is a contraction from $H^2_{\cle}(\D^n)$ into $H^2_{\cle_*}(\D^n)$, it is easy to check (as also pointed out earlier) that $K_{\Theta}\geq 0$, where
\[
K_{\Theta}(\z, \w) = \mathbb{S}_n(\z, \w)^{-1} (I-\Theta(\z) \Theta(\w)^*) \quad \quad (\z, \w \in \D^n).
\]
Here we say that $K_{\Theta}$ is a ($\clb(\cle_*)$-valued) de Branges-Rovnyak kernel on $\D^n$. In the following, we do not assume a priori that $K$ is analytic in $z_1, \ldots, z_n$.

\begin{thm}\label{thm: de Branges Dn}
Let $K: \D^n \times \D^n \raro \clb(\cle_*)$ be a kernel on $\D^n$. Then $K=K_{\Theta}$ for some Schur-Agler function $\Theta \in \cls\cla(\D^n,\clb(\cle,\cle_*))$ and a  Hilbert space $\cle$ if and only if there exist $\clb(\cle_*)$-valued kernels $K_1, \ldots, K_n$ on $\D^n$ such that
\[
K(\z, \w) = \sum_{i=1}^n \frac{1}{\prod\limits_{j \neq i} (1 - z_j \bar{w}_j)} K_i(\z, \w),
\]
for all $\z, \w \in \D^n$, and $I_{\cle_*} - \mathbb{S}_n^{-1} \cdot K \geq 0$.	
\end{thm}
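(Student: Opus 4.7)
The forward direction is a direct application of the Agler realization recalled just before the statement. Given $\Theta \in \cls\cla(\D^n, \clb(\cle,\cle_*))$ with Agler kernels $L_1, \ldots, L_n$ satisfying $I - \Theta(\z)\Theta(\w)^* = \sum_{i=1}^n (1-z_i\bar{w}_i) L_i(\z,\w)$, I would multiply both sides by $\mathbb{S}_n(\z,\w)$ to obtain
\[
K_\Theta(\z,\w) = \sum_{i=1}^n \frac{L_i(\z,\w)}{\prod_{j\neq i}(1-z_j\bar{w}_j)},
\]
so $K_i := L_i$ furnishes the required decomposition, each summand being a kernel by the Schur product theorem applied to $L_i$ and the evident positive Szeg\H{o}-type factor. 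The inequality $I_{\cle_*} - \mathbb{S}_n^{-1} \cdot K_\Theta = \Theta(\z)\Theta(\w)^* \geq 0$ is then immediate.

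For the converse, the plan is to adapt the lurking-isometry argument from the proof of Theorem \ref{thm: de Branges D} to $n$ variables. First I would clear denominators: multiplying the assumed decomposition of $K$ by $\mathbb{S}_n^{-1}(\z,\w) = \prod_k(1-z_k\bar{w}_k)$ yields $\mathbb{S}_n^{-1}\cdot K = \sum_{i=1}^n (1-z_i\bar{w}_i) K_i$, so the positivity hypothesis becomes $I_{\cle_*} - \sum_i (1-z_i\bar{w}_i) K_i \geq 0$ as a $\clb(\cle_*)$-valued kernel on $\D^n$. By Kolmogorov factorizations, write $K_i(\z,\w) = G_i(\z) G_i(\w)^*$ for some Hilbert space $\clg_i$ and $G_i: \D^n \to \clb(\clg_i, \cle_*)$, and $I - \sum_i (1-z_i\bar{w}_i) K_i = F(\z) F(\w)^*$ for some Hilbert space $\clf$ and $F: \D^n \to \clb(\clf, \cle_*)$, neither being a priori analytic. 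Rearranging produces the key identity
\[
I_{\cle_*} + \sum_{i=1}^n z_i \bar{w}_i G_i(\z) G_i(\w)^* = F(\z) F(\w)^* + \sum_{i=1}^n G_i(\z) G_i(\w)^*.
\]

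The core step is then the lurking isometry. The key identity forces the densely defined map
\[
V : \begin{bmatrix} \eta \\ \bar{w}_1 G_1(\w)^* \eta \\ \vdots \\ \bar{w}_n G_n(\w)^* \eta \end{bmatrix} \longmapsto \begin{bmatrix} F(\w)^* \eta \\ G_1(\w)^* \eta \\ \vdots \\ G_n(\w)^* \eta \end{bmatrix} \qquad (\w \in \D^n,\ \eta \in \cle_*)
\]
to be isometric on its domain; after enlarging $\bigoplus_i \clg_i$ if necessary, I would extend $V$ to a genuine isometry $V: \cle_* \oplus \clg \to \clf \oplus \clg$ with $\clg := \bigoplus_i \clg_i$. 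Writing $V$ in block form with $B = [B_1 \ \cdots \ B_n]$, $C = [C_1 \ \cdots \ C_n]^t$, and $D = [D_{ji}]$, the two block rows of the defining relation read
\[
F(\w)^* = A + B E(\w)^* G(\w)^* \quad \text{and} \quad G(\w)^* = C + D E(\w)^* G(\w)^*,
\]
where $E(\z) = \bigoplus_i z_i I_{\clg_i}$ and $G(\w) = [G_1(\w) \ \cdots \ G_n(\w)]$. Solving the second for $G(\w)^* = (I - D E(\w)^*)^{-1} C$ and substituting gives the transfer-function form $F(\w)^* = A + B E(\w)^*(I - D E(\w)^*)^{-1} C$, so $F$ is analytic on $\D^n$ and $\Theta := F$ fits the co-isometric realization form of the Schur-Agler class. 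Reading the key identity off again produces $I - \Theta(\z) \Theta(\w)^* = \sum_i (1 - z_i \bar{w}_i) K_i(\z, \w)$, which exhibits $\Theta$ as Schur-Agler with Agler kernels $K_i$, and dividing by $\mathbb{S}_n^{-1}$ recovers $K_\Theta = K$.

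The main obstacle I anticipate is the multivariable lurking-isometry step. Unlike in the disc case treated by Theorem \ref{thm: de Branges D}, the domain is a subspace that tracks each coordinate $\bar{w}_i G_i(\w)^*$ separately, and one has to verify carefully that the map is well defined and isometric, and that its extension interacts correctly with the coordinate block structure needed for the transfer-function formula (and hence for identifying $\Theta$ as Schur-Agler with the prescribed Agler kernels rather than some other decomposition). Once this bookkeeping is in place, the remainder of the argument is a direct adaptation of the disc-case proof.
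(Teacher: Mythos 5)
Your proposal is correct and follows essentially the same route as the paper: Kolmogorov-factor $I_{\cle_*}-\mathbb{S}_n^{-1}\cdot K$ and each $K_i$, rearrange into the lurking-isometry identity, extend, and read off the transfer-function realization (the paper only sketches this last step and cites the proof of the disc case). The only difference is notational (you swap the roles of $F$ and the $G_i$'s compared to the paper), and you supply more of the bookkeeping than the paper does.
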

\begin{proof}
The “only if” part of this statement is easy, and the proof of the “if” part is similar to the proof of Theorem \ref{thm: de Branges D}. We give only a sketch: Suppose $K_1, \ldots, K_n$ are $\clb(\cle_*)$-valued kernels on $\D^n$, $\z, \w \in \D^n$, and suppose
\[
K(\z, \w) = \sum_{i=1}^n \frac{1}{\prod\limits_{j \neq i} (1 - z_j \bar{w}_j)} K_i(\z, \w).
\]
Then
\[
\mathbb{S}_n^{-1}(\z, \w) K(\z, \w) = \sum\limits_{i=1}^n(1-z_i\bar{w}_i) K_i(\z, \w).
\]
Since $I_{\cle_*} - \mathbb{S}_n^{-1} \cdot K\geq 0$, there exist a Hilbert space $\clg$ and a function $G: \D^n \raro \clb(\clg, \cle_*)$ such that
\[
I_{\cle} - \mathbb{S}_n^{-1}(\z, \w) K(\z, \w) = G(\z) G(\w)^*.
\]
Again, since $K_i\geq 0$, there exist Hilbert spaces $\clf_1, \ldots, \clf_n$, and functions $F_i: \D^n \raro \clb(\clf_i, \cle_*)$, $i=1, \ldots, n$, such that $K_i(\z, \w) = F_i(\z) F_i(\w)^*$ for all $i=1, \ldots, n$. Hence
\[
\mathbb{S}_n^{-1}(\z, \w) K(\z, \w) = \sum\limits_{i}^n(1-z_i\bar{w}_i) F_i(\z) F_i(\w)^*,
\]
which implies
\[
I_{\cle_*} + \sum\limits_{i=1}^n z_i \bar{w}_i F_i(\z) F_i(\w)^* = G(\z) G(\w)^* + \sum\limits_{i=1}^n F_i(\z) F_i(\w)^*,
\]
for all $\z, \w \in \D^n$. Now one can proceed with the lurking-isometry method, as in the proof of Theorem \ref{thm: de Branges D}, to complete the proof of the theorem.
\end{proof}

An analogous statement also holds in the case of multipliers of \textit{Drury-Arveson space} $H^2_n$. We recall that $H^2_n$ is the reproducing kernel Hilbert space corresponding to the kernel
\[
S(\z, \w) := \frac{1}{1-\langle \z, \w\rangle} \qquad (\z, \w \in \mathbb{B}^n),
\]
where $\mathbb{B}^n = \{\z \in \mathbb{C}^n: \sum\limits_{i=1}^{n}|z_i|^2 < 1\}$ is the open unit ball of $\mathbb{C}^n$, and $\langle \z, \w\rangle = \sum\limits_{i=1}^n z_i \bar{w}_i$. Given Hilbert spaces $\cle$ and $\cle_*$, the $\clb(\cle, \cle_*)$-valued Drury-Arveson multiplier space is defined by
\[
\clm_n(\cle,\cle_*)=\{\Theta: \mathbb{B}^n\rightarrow \clb(\cle,\cle_*): \Theta(H^2_n \otimes \cle) \subseteq  H^2_n\otimes\cle_*\}.
\]
Here $\clm_n(\cle,\cle_*)$ is a Banach space equipped with the norm $\|\Theta\|_{\clm_n(\cle,\cle_*)} = \|M_{\Theta}\|$ (the operator norm of $M_\Theta$). In this setting, the de Branges-Rovnyak kernel $K_{\Theta}$ corresponding to $\Theta \in \clm_d(\cle,\cle_*)$ is defined by
\[
K_{\Theta}(\z, \w) = \frac{I-\Theta(\z) \Theta(\w)^*}{1 - \langle \z, \w \rangle} \qquad (\z, \w \in \mathbb{B}^n).
\]
The proof of the following theorem is completely analogous to the proof of Theorems \ref{thm: de Branges D} and \ref{thm: de Branges Dn}. We leave details to the reader.

\begin{thm}\label{thm: de Branges Bn}
Let $\cle_*$ be a Hilbert space and $K:\mathbb{B}^n\times \mathbb{B}^n\rightarrow \clb(\cle)$ be a kernel. Then $K = K_{\Theta}$ for some $\Theta \in \clm_n(\cle,\cle_*)$ and Hilbert space $\cle$ if and only if
\[
I_{\cle_*} - (1 - \langle \z, \w \rangle) \cdot K(\z, \w) \geq 0.
\]
\end{thm}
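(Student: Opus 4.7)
The proof will follow exactly the template of Theorem \ref{thm: de Branges D}, with the single-variable resolvent $(1-z\bar{w})^{-1}$ upgraded to the Szeg\"{o}-type kernel of $H^2_n$ and the scalar state variable replaced by the row-contraction variable $Z(\z):\clg^n\to\clg$, $Z(\z)(\xi_1,\ldots,\xi_n)=\sum_{i=1}^n z_i\xi_i$. The necessity is immediate: if $K=K_{\Theta}$, then $I_{\cle_*}-(1-\langle\z,\w\rangle)K(\z,\w)=\Theta(\z)\Theta(\w)^*\geq 0$. For sufficiency, starting from $K\geq 0$ and $I_{\cle_*}-(1-\langle\z,\w\rangle)K(\z,\w)\geq 0$, I would first factor
\[
K(\z,\w)=G(\z)G(\w)^*, \qquad I_{\cle_*}-(1-\langle\z,\w\rangle)K(\z,\w)=F(\z)F(\w)^*,
\]
for Hilbert spaces $\clg,\clf$ and (a priori not necessarily analytic) functions $G:\mathbb{B}^n\to\clb(\clg,\cle_*)$, $F:\mathbb{B}^n\to\clb(\clf,\cle_*)$. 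Eliminating $K$ produces the main identity
\[
I_{\cle_*}+\sum_{i=1}^n z_i\bar{w}_i\, G(\z)G(\w)^* = G(\z)G(\w)^*+F(\z)F(\w)^*.
\]

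\textbf{Lurking isometry and realization.} Reading both sides as Gram matrices of column vectors in $\cle_*\oplus\clg^n$ and $\clf\oplus\clg$ respectively, the map
\[
V : \begin{bmatrix}\eta\\ \bar{w}_1 G(\w)^*\eta\\ \vdots\\ \bar{w}_n G(\w)^*\eta\end{bmatrix}\longmapsto\begin{bmatrix}F(\w)^*\eta\\ G(\w)^*\eta\end{bmatrix}\qquad(\w\in\mathbb{B}^n,\ \eta\in\cle_*)
\]
is a well-defined isometry from a subspace of $\cle_*\oplus\clg^n$ into $\clf\oplus\clg$; by taking its closure and, if necessary, enlarging $\clg$ by an infinite-dimensional defect summand, extend $V$ to an isometry $V=\begin{bmatrix}A & B\\ C & D\end{bmatrix}:\cle_*\oplus\clg^n\to\clf\oplus\clg$. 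With $Z(\w)^*:\clg\to\clg^n$, $\eta\mapsto(\bar{w}_1\eta,\ldots,\bar{w}_n\eta)$, the isometry relation unpacks to
\[
A+BZ(\w)^* G(\w)^* = F(\w)^*, \qquad C+DZ(\w)^* G(\w)^* = G(\w)^*.
\]
Since $\|DZ(\w)^*\|\leq\|\w\|<1$ on $\mathbb{B}^n$, the second equation inverts to $G(\w)^*=(I-DZ(\w)^*)^{-1}C$, and substituting into the first and taking adjoints yields
\[
\Theta(\z):=F(\z)=A^*+C^*(I-Z(\z)D^*)^{-1}Z(\z)B^*,
\]
which is exactly the standard Drury-Arveson transfer function attached to the co-isometric colligation $V^*$. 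By the realization theorem characterizing $\clm_n$ (Ball-Trent-Vinnikov), $\Theta\in\clm_n(\clf,\cle_*)$, and setting $\cle:=\clf$ the original identity rearranges to $K(\z,\w)=(1-\langle\z,\w\rangle)^{-1}(I_{\cle_*}-\Theta(\z)\Theta(\w)^*)=K_{\Theta}(\z,\w)$, as required.

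\textbf{Main obstacle.} The whole argument is a mechanical $n$-variable rerun of the proof of Theorem \ref{thm: de Branges D} via the row-contraction substitution $z\mapsto Z(\z)$ and the state-space substitution $\clg\mapsto\clg^n$; nothing essentially new appears. The one external input is the identification of transfer functions of the above form with multipliers of $H^2_n$, which is a classical result and may simply be cited. A minor technical point is that $V$ is initially defined only on a non-closed subspace and must be extended by closure and defect-padding; this is routine and strictly parallel to the one-variable argument.
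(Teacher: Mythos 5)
Your proof is correct and is precisely the route the paper intends: the authors explicitly state that Theorem \ref{thm: de Branges Bn} "is completely analogous to the proof of Theorems \ref{thm: de Branges D} and \ref{thm: de Branges Dn}" and leave the details to the reader, and your argument is exactly that lurking-isometry rerun with the scalar state replaced by the row-operator variable $Z(\z)$ and the state space $\clg$ by $\clg^n$, invoking the Ball--Trent--Vinnikov realization theorem for Drury--Arveson multipliers at the end.
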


In the above theorem, we do not assume a priori that $K$ is analytic in $z_1, \ldots, z_n$.

\section{Agler Kernels and Factorizations}\label{Section: agler kernel}

In this section we investigate factorizations of two-variable Schur functions in terms of Agler kernels. We shall be particularly interested in the case of one variable factors and Agler kernels of functions in $\cls(\D^2)$.

Here and in what follows, $\clh_K$ will denote the reproducing kernel Hilbert space corresponding to the kernel $K$. Moreover, if $K: \D^2 \times \D^2 \raro \mathbb{C}$, then $K(\cdot, \w) \in \clh_K$ will denote the kernel function at $\w \in \D^2$, that is
\[
\Big(K(\cdot, \w)\Big)(\z) = K(\z, \w) \qquad (\z \in \D^2),
\]
and
\[
f(\w) = \langle f, K(\cdot, \w) \rangle_{\clh_K},
\]
for all $f \in \clh_K$ and $\w \in \D^2$. For notational convenience we write $\bm{0}= (0,0)$.

We are now ready for the main result of this section (see Remark \ref{rem: phi(0) =0} on the assumption $\vp(\bm{0}) \neq 0$):

\begin{thm}\label{thm: agler kernel and fact}
Let $\vp \in \cls(\D^2)$ and suppose $\vp(\bm{0})\neq 0$. The following assertions are equivalent:

\NI(1) There exist $\vp_1$ and $\vp_2$ in $\cls(\D)$ such that
\[
\vp(\z)=\vp_1(z_1)\vp_2(z_2) \qquad (\z \in \D^2).
\]
(2) There exist Agler kernels $\{K_1, K_2\}$ of $\vp$ such that $K_1$ depends only on $z_1$ and $\bar{w}_1$, and
\[
\overline{\vp(\bm{0})} \, K_2(\cdot,(w_1,0)) = \overline{\vp(w_1,0)} \, K_2(\cdot, \bm{0}) \qquad (w_1 \in \D).
\]
(3)	There exist Agler kernels $\{L_1, L_2\}$ of $\vp$ such that all the functions in $\clh_{L_1}$ depends only on $z_1$, and
\[
\vp(\bm{0}) f(\cdot,0) = \vp(\cdot,0) \, f(\bm{0})\qquad (f \in \clh_{L_2}).
\]	
(4) $\vp = \tau_V$ for some co-isometric colligation
\[
V=\begin{bmatrix}
\vp(\bm{0})&B_1&B_2\\C_1&D_1&D_2\\C_2&0&D_4
\end{bmatrix} \in \clb(\mathbb{C}\oplus (\mathcal{H}_1\oplus \mathcal{H}_2)),
\]
with $\vp(\bm{0}) D_2 = C_1B_2$.
\end{thm}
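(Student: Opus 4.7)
My plan is to run the cycle $(1) \Rightarrow (2) \Leftrightarrow (3) \Rightarrow (4) \Rightarrow (1)$. For $(1) \Rightarrow (2)$, I would use the algebraic identity
\[
1 - \vp(\z)\overline{\vp(\w)} = \bigl(1-\vp_1(z_1)\overline{\vp_1(w_1)}\bigr) + \vp_1(z_1)\overline{\vp_1(w_1)}\bigl(1 - \vp_2(z_2)\overline{\vp_2(w_2)}\bigr),
\]
and invoke the one-variable Schur factorization $1 - \vp_i \overline{\vp_i} = (1 - z_i \bar w_i)k_i$ for each $i$. This produces Agler kernels $K_1(\z,\w) = k_1(z_1,\bar w_1)$ and $K_2(\z,\w) = \vp_1(z_1)\overline{\vp_1(w_1)} k_2(z_2,\bar w_2)$; the identity on $K_2$ is then immediate from $\vp(\bm 0) = \vp_1(0)\vp_2(0)$ and $\vp(w_1,0) = \vp_1(w_1)\vp_2(0)$. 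The equivalence $(2)\Leftrightarrow(3)$ (with $L_i = K_i$) is the standard reproducing kernel dictionary: $K_1$ depending only on $z_1,\bar w_1$ is tantamount to every $f\in\clh_{K_1}$ being a function of $z_1$ alone, and pairing the $K_2$-identity in $(2)$ with $f\in\clh_{K_2}$ converts it to $\vp(\bm 0) f(\cdot,0) = \vp(\cdot,0) f(\bm 0)$.

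For the technical core $(2)\Rightarrow(4)$, I would apply the lurking-isometry method to the rearranged Agler decomposition
\[
1 + z_1\bar w_1 K_1(\z,\w) + z_2\bar w_2 K_2(\z,\w) = \vp(\z)\overline{\vp(\w)} + K_1(\z,\w) + K_2(\z,\w).
\]
Setting $\clh_i := \clh_{K_i}$ and defining $\xi_\w := 1 \oplus \bar w_1 K_1(\cdot,\w) \oplus \bar w_2 K_2(\cdot,\w)$, $\eta_\w := \overline{\vp(\w)} \oplus K_1(\cdot,\w) \oplus K_2(\cdot,\w)$ inside $\mathbb{C} \oplus \clh_1 \oplus \clh_2$, the Agler identity gives $\la\xi_\w,\xi_{\w'}\ra = \la\eta_\w,\eta_{\w'}\ra$, so $\xi_\w \mapsto \eta_\w$ extends (after enlarging $\clh_1\oplus\clh_2$ if necessary) to an isometry $V$; then $W := V^*$ is co-isometric and the standard transfer-function computation shows $\vp = \tau_W$. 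Writing $V = \bigl(\begin{smallmatrix} a & B_1 & B_2 \\ C_1 & D_{11} & D_{12} \\ C_2 & D_{21} & D_{22}\end{smallmatrix}\bigr)$ and evaluating $V\xi_\w = \eta_\w$ at $\w = \bm 0$ gives $a = \overline{\vp(\bm 0)}$ and $C_i = K_i(\cdot,\bm 0)$. The row-$2$ equation combined with the fact (from (2)) that every element of $\clh_{K_1}$ depends only on $z_1$ forces $\bar w_2 D_{12}K_2(\cdot,\w)$ to be $\bar w_2$-independent; antiholomorphy in $\w$ then kills this term, so $D_{12} = 0$ on $\clh_2$. The $K_2$-identity in $(2)$ reduces the row-$3$ equation at $w_2 = 0$ to
\[
D_{21} k_1(\cdot,\bar w_1) = \overline{\vp(\bm 0)}^{-1}\,\overline{h(w_1)}\,C_2,\qquad h(w_1) := \frac{\vp(w_1,0)-\vp(\bm 0)}{w_1},
\]
and the row-$1$ equation similarly identifies $B_1 = \la\cdot,h\ra_{\clh_1}$, forcing $\overline{\vp(\bm 0)}\,D_{21} = C_2 B_1$ on $\overline{\mathrm{span}}\{k_1(\cdot,\bar w_1)\} = \clh_1$. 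Via the correspondence $D_3^W = D_{12}^*$, $D_2^W = D_{21}^*$, etc., this delivers exactly $D_3^W = 0$ and $\vp(\bm 0)\,D_2^W = C_1^W B_2^W$.

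For $(4)\Rightarrow(1)$, the upper-triangular $D$-block allows the explicit expansion
\[
\tau_V(\z) = \vp(\bm 0) + z_1 g_1(z_1) + z_2 g_2(z_2) + z_1 z_2\, B_1 (I - z_1 D_1)^{-1} D_2 (I - z_2 D_4)^{-1} C_2,
\]
with scalar $g_1(z_1) := B_1(I-z_1 D_1)^{-1}C_1$ and $g_2(z_2) := B_2(I-z_2 D_4)^{-1}C_2$. Substituting $D_2 = \vp(\bm 0)^{-1}C_1 B_2$ and using that the inner product $C_1 B_2$ sandwiched between the scalars collapses into the product $g_1(z_1) g_2(z_2)/\vp(\bm 0)$ yields
\[
\vp(\bm 0)\,\vp(\z) = (\vp(\bm 0)+z_1 g_1(z_1))(\vp(\bm 0)+z_2 g_2(z_2)) = \vp(z_1,0)\,\vp(0,z_2).
\]
The bound $\|\vp\|_\infty\le 1$ then gives $|\vp(z_1,0)|\cdot|\vp(0,z_2)| \le |\vp(\bm 0)|$ on $\D^2$, so one can pick constants $c_1,c_2\in\mathbb{C}$ with $c_1c_2 = \vp(\bm 0)$, $|c_1|\ge \sup_{z_1}|\vp(z_1,0)|$, and $|c_2|\ge \sup_{z_2}|\vp(0,z_2)|$; taking $\vp_1(z_1) := \vp(z_1,0)/c_1$ and $\vp_2(z_2) := \vp(0,z_2)/c_2$ produces the required factors in $\cls(\D)$.

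The main obstacle is the $(2)\Rightarrow(4)$ step, where the structural hypotheses on the Agler kernels must be propagated from kernel sections to bounded operators on the full reproducing kernel spaces. Specifically, ruling out $D_{12}$ on all of $\clh_2$ relies on the antiholomorphy-in-$\w$ argument to eliminate a factor that appears, a priori, only to vanish after multiplication by $\bar w_2$; and promoting the formula for $D_{21}$ from the dense set $\{k_1(\cdot,\bar w_1)\}$ to an exact operator identity $\overline{\vp(\bm 0)}\,D_{21} = C_2 B_1$ is the central calculation linking condition $(2)$ to the constrained colligation structure in $(4)$.
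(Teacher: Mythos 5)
Your proposal follows the same overall architecture as the paper for $(1)\Rightarrow(2)$, $(2)\Leftrightarrow(3)$, and $(2)\Rightarrow(4)$ (same Agler kernel decomposition, same reproducing-kernel dictionary, same lurking-isometry), and the derivations of $D_{12}=0$ and $\overline{\vp(\bm 0)}D_{21}=C_2 B_1$ are essentially the paper's computations. One point you should not gloss over in $(2)\Rightarrow(4)$: you write ``extends (after enlarging $\clh_1\oplus\clh_2$ if necessary) to an isometry'', but the identities you then use to pin down the block entries hold a priori only on the lurking domain $\cld=\overline{\mathrm{span}}\{1\oplus\bar w_1 K_1(\cdot,\w)\oplus\bar w_2 K_2(\cdot,\w)\}$. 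To conclude $D_{12}=0$ and the formula for $D_{21}$ as operator identities (rather than statements modulo an arbitrary extension on $\cld^\perp$), you need to verify that $\cld=\mathbb{C}\oplus\clh_{K_1}\oplus\clh_{K_2}$, i.e.\ that no enlargement is needed. The paper proves this explicitly, and the proof uses precisely hypothesis $(2)$ (that $K_1$ depends only on $z_1,\bar w_1$, hence every $f\in\clh_{K_1}$ is a function of $z_1$ alone); your write-up invokes that hypothesis to handle $D_{12}$ but not to establish the prior density statement on which the $D_{12}$ argument implicitly relies. This is a real, if small, gap.

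Your $(4)\Rightarrow(1)$ argument is a genuinely different and more elementary route than the paper's. The paper manufactures two explicit one-variable co-isometric colligations $V_1,V_2$ from the blocks of $V$ (introducing auxiliary scalars $x,y$ with $xy=a$, $|y|^2=1-B_1B_1^*$) and checks $\tau_V=\tau_{V_1}\tau_{V_2}$; this uses $VV^*=I$. You instead expand $\tau_V$ directly through the upper-triangular $D$-block, substitute $\vp(\bm 0)D_2=C_1B_2$, and observe that because $B_2(I-z_2D_4)^{-1}C_2$ is a scalar the mixed term factors, giving $\vp(\bm 0)\,\vp(\z)=\vp(z_1,0)\,\vp(0,z_2)$. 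Notably this part of your argument does not use the co-isometry condition at all — only the block structure, the algebraic constraint, and $\vp(\bm 0)\neq 0$ — and then you recover the Schur property of the factors by a normalization argument using $\|\vp\|_\infty\le 1$ (which works: $\sup|\vp(\cdot,0)|\cdot\sup|\vp(0,\cdot)|\le|\vp(\bm 0)|$ follows from the factorization together with $\|\vp\|_\infty\le 1$, so suitable constants $c_1,c_2$ with $c_1c_2=\vp(\bm 0)$ exist). This is cleaner than the paper's construction, and it isolates precisely which hypotheses drive the factorization. Worth keeping.
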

\begin{proof}
Suppose first that $\vp(\z)=\vp_1(z_1)\vp_2(z_2)$, $\z \in \D^2$, for some $\vp_1$ and $\vp_2$ in $\cls(\D)$. Then
\[
1- \vp(\z)\overline{\vp(\w)} = 1-\vp_1(z_1) \overline{\vp_1(w_1)} + \vp_1(z_1) (1- \vp_2(z_2) \overline{\vp_2(w_2)})\, \overline{\vp_1(w_1)},
\]
and hence
\[
1- \vp(\z)\overline{\vp(\w)} = (1- z_1 \bar{w}_1) K_1(\z,\w) + (1 - z_2 \bar{w}_2) K_2(\z, \w),
\]
where
\[
K_1(\z, \w) = \frac{1-\vp_1(z_1)\overline{\vp_1(w_1)}}{1-z_1\bar{w}_1} \quad
\text{and} \quad
K_2(\z, \w)= \frac{\vp_1(z_1) (1-\vp_2(z_2)\overline{\vp_2(w_2)})\, \overline{\vp_1(w_1)}}{1- z_2 \bar{w}_2},
\]
and $\z, \w \in \D^2$. Then $\{K_1, K_2\}$ are Agler kernels of $\vp$ and satisfies the conditions of (2). This proves (1)$\Rightarrow$(2).

\NI (2)$\Rightarrow$ (3): Set $L_i= K_i$, $i=1,2$, and suppose $f \in \clh_{K_1}$. Since
\[
\w \mapsto f(w) = \langle f, L_1(\cdot, \w) \rangle = \langle f, K_1(\cdot, \w) \rangle,
\]
and $K_1$ depends only on $z_1$ and $\bar{w}_1$, it follows that all the functions in $\clh_{L_1}$ depends only on $z_1$. On the other hand, if $f \in \clh_{L_2}$, then
\[
\begin{split}
\vp(\bm{0}) f(w_1,0) & = \langle f, \overline{\vp(\bm{0})} L_2(\cdot, (w_1,0)) \rangle_{\clh_{L_2}}
\\
& = \langle f, \overline{\vp(\bm{0})} K_2(\cdot, (w_1,0)) \rangle_{\clh_{K_2}}
\\
& = \langle f, \overline{\vp(w_1,0)} K_2(\cdot, (\bm{0})) \rangle_{\clh_{K_2}}
\\
& = \vp(w_1,0) \langle f, K_2(\cdot, (\bm{0})) \rangle_{\clh_{K_2}},
\end{split}
\]
and hence $\vp(\bm{0})f(w_1,0) = \vp(w_1,0) f(\bm{0})$ for all $w_1 \in \D$.

\NI (3)$\Rightarrow$ (2): This is just the reverse of the argument in the above proof.

\NI (2)$\Rightarrow$ (4): Suppose $\{K_1, K_2\}$ are Agler kernels of $\vp$, and suppose that $K_1$ depends only on $z_1$ and $\bar{w}_1$, and
\begin{equation}\label{eqn: agler condition 4}
\overline{\vp(\bm{0})} \, K_2(\cdot,(w_1,0)) = \overline{\vp(w_1,0)} \, K_2(\cdot, \bm{0}) \qquad (w_1 \in \D).
\end{equation}
Now
\[
1-\vp(\z) \overline{\vp(\w)} = (1-z_1 \bar{w}_1) \langle K_1(\cdot, \w), K_1(\cdot, \z) \rangle_{\clh_{K_1}} + (1-z_2 \bar{w}_2) \langle K_2(\cdot, \w), K_2(\cdot, \z) \rangle_{\clh_{K_2}},
\]
implies that
\[
\begin{split}
1 + z_1 \bar{w}_1 \langle K_1(\cdot, \w), K_1(\cdot, \z) \rangle_{\clh_{K_1}} + & z_2 \bar{w}_2 \langle K_2(\cdot, \w), K_2(\cdot, \z) \rangle_{\clh_{K_2}}
= \vp(\z) \overline{\vp(\w)}
\\
& + \langle K_1(\cdot, \w), K_1(\cdot, \z) \rangle_{\clh_{K_1}} + \langle K_2(\cdot, \w), K_2(\cdot, \z) \rangle_{\clh_{K_2}},
\end{split}
\]
for all $\z, \w \in \D^2$. Therefore
\[
V:\begin{bmatrix} 1 \\ \bar{w}_1 K_1(\cdot, \w) \\ \bar{w}_2 K_2(\cdot, \w) \end{bmatrix}
\mapsto
\begin{bmatrix} \overline{\vp(\w)} \\ K_1(\cdot, \w) \\ K_2(\cdot, \w) \end{bmatrix} \qquad (\w \in \D^2),
\]
defines an isometry from $\cld$ onto $\clr$, where
\[
\cld = \overline{\mbox{span}}\left\{\begin{bmatrix} 1 \\ \bar{w}_1 K_1(\cdot, \w) \\ \bar{w}_2 K_2(\cdot, \w) \end{bmatrix}: \w \in\D^2\right\}
\subseteq \mathbb{C}\oplus \clh_{K_1} \oplus \clh_{K_2},
\]
and
\[
\clr = \overline{\mbox{span}}\left\{\begin{bmatrix} \overline{\vp(\w)} \\ K_1(\cdot, \w) \\ K_2(\cdot, \w) \end{bmatrix}: \w \in \D^2 \right\}
\subseteq \mathbb{C}\oplus \clh_{K_1} \oplus \clh_{K_2}.
\]
Note that
\[
\clh_{K_1} \oplus \clh_{K_2} = \overline{\mbox{span}}\left\{\begin{bmatrix} \bar{w}_1 K_1(\cdot, \w) \\ \bar{w}_2 K_2(\cdot, \w) \end{bmatrix}: \w \in \D^2 \right\}.
\]
Indeed, if
\[
\begin{bmatrix} f\\g\end{bmatrix}\in [\clh_{K_1} \oplus \clh_{K_2}]\ominus {\mbox{span}}\left\{\begin{bmatrix} \bar{w}_1 K_1(\cdot, \w) \\ \bar{w}_2 K_2(\cdot, \w) \end{bmatrix}: \w \in \D^2 \right\},
\]
then
\[
0= \Big\langle \begin{bmatrix} f\\g\end{bmatrix}, \begin{bmatrix} \bar{w}_1 K_1(\cdot, \w) \\ \bar{w}_2 K_2(\cdot, \w) \end{bmatrix} \Big\rangle_{\clh_{K_1} \oplus \clh_{K_2}}
= \langle f, \bar{w}_1  K_1(\cdot, \w) \rangle_{\clh_{K_1}} + \langle g, \bar{w}_2  K_2(\cdot, \w) \rangle_{\clh_{K_2}},
\]
that is, $w_1 f(\w) + w_2 g(\w) = 0$ for all $\w \in \D^2$. Since $K_1$ depends only on $z_1 $ and $\bar{w}_1$, all the functions in $\clh_{K_1}$ depends only on $z_1$. Therefore, if $w_2=0$, then the above equality implies that $w_1f((w_1, 0))=0$, and hence $f=0$. Consequently, $w_2g(\w)=0$, $\w \in \D^2$, and hence $g=0$, and proves our claim. In particular, $V \in \clb(\mathbb{C}\oplus \clh_{K_1} \oplus \clh_{K_2})$ is an isometry. The above proof also implies that
\[
\clh_{K_i} = \overline{\mbox{span}} \{ \bar{w}_i K_i(\cdot, \w): \w \in\D^2 \},
\]
for $i=1, 2$. Now we consider the co-isometry $V^*$ and set
\[
V^*=\begin{bmatrix}\vp(\bm{0})&B\\C&D  \end{bmatrix}
= \begin{bmatrix}\vp(\bm{0})&B_1&B_2\\C_1&D_1&D_2\\C_2&D_3&D_4  \end{bmatrix}\in
\clb(\mathbb{C}\oplus (\clh_{K_1} \oplus \clh_{K_2})).
\]
Since
\[
\begin{bmatrix}\overline{\vp(\bm{0})} & C^* \\ B^* & D^* \end{bmatrix} \begin{bmatrix} 1 \\ \bar{w}_1 K_1(\cdot, \w) \\ \bar{w}_2 K_2(\cdot, \w) \end{bmatrix}
= \begin{bmatrix} \overline{\vp(\w)} \\ K_1(\cdot, \w) \\ K_2(\cdot, \w) \end{bmatrix} \qquad (\w \in \D^2),
\]
it follows that
\[
\overline{\vp(\bm{0})} + C^*\begin{bmatrix} \bar{w}_1 K_1(\cdot, \w) \\ \bar{w}_2 K_2(\cdot, \w)
\end{bmatrix}=\overline{\vp(\w)},
\]
and
\[
B^* + D^*\begin{bmatrix}\bar{w}_1 K_1(\cdot, \w) \\ \bar{w}_2 K_2(\cdot, \w)  \end{bmatrix} = \begin{bmatrix} K_1(\cdot, \w) \\ K_2(\cdot, \w)  \end{bmatrix},
\]
for all $\w \in \D^2$. Now plug $\w = \bm{0}$ into the identity above to see that
\[
B^* = \begin{bmatrix} K_1(\cdot, \bm{0}) \\ K_2(\cdot, \bm{0}) \end{bmatrix},
\]
and hence
\[
D^*\begin{bmatrix}\bar{w}_1 K_1(\cdot, \w) \\ \bar{w}_2 K_2(\cdot, \w)  \end{bmatrix} = \begin{bmatrix} K_1(\cdot, \w) - K_1(\cdot, \bm{0}) \\ K_2(\cdot, \w) - K_2(\cdot, \bm{0}) \end{bmatrix}.
\]
Since $D^* = \begin{bmatrix}D_1^*& D_3^* \\ D_2^*& D_4^*\end{bmatrix}$, it follows that
\[
\bar{w_1} D_1^* K_1(\cdot, \w) + \bar{w}_2 D_3^* K_2(\cdot, \w) = K_1(\cdot, \w) - K_1(\cdot, \bm{0}),
\]
and
\begin{equation}\label{eqn: Agler 2nd equation}
\bar{w}_1 D_2^* K_1(\cdot, \w) + \bar{w}_2 D_4^* K_2(\cdot, \w) = K_2(\cdot, \w) - K_2(\cdot, \bm{0}).
\end{equation}
Plugging $w_2=0$ into the first identity, we get
\[
\bar{w_1} D_1^* K_1(\cdot, (w_1,0)) = K_1(\cdot, (w_1,0)) - K_1(\cdot, \bm{0}),
\]
for all $w_1 \in \D$. Again, noting that $K_1$ depends only on $z_1 $ and $\bar{w}_1$, we deduce
\[
\bar{w}_1 D_1^* K_1(\cdot, \w) = K_1(\cdot, \w) - K_1(\cdot, \bm{0}) \qquad (\w \in \D^2),
\]
and consequently $D_3^* \Big(\bar{w}_2 K_2(\cdot, \w)\Big) =0$, $\w \in \D^2$. This, along with the fact that $\{\bar{w}_2 K_2(\cdot, \w): \w \in \D^2\}$ is dense in $\clh_{K_2}$, implies $D_3 = 0$. We next plug $w_2 = 0$ into \eqref{eqn: Agler 2nd equation} to get
\[
D_2^* (\bar{w_1} K_1(\cdot, (w_1,0))) = K_2(\cdot, (w_1,0)) - K_2(\cdot, \bm{0}).
\]
Now we turn to compute $C_1^*$. Since $C^*\begin{bmatrix} \bar{w}_1K_1(\cdot, \w) \\ \bar{w} K_2(\cdot, \w) \end{bmatrix} = \overline{\vp(\w)}-\overline{\vp(\bm{0})}$, we have
\[
C_1^*(\bar{w}_1 K_1(\cdot, \w)) + C_2^* (\bar{w}_2 K_2(\cdot, \w)) = \overline{\vp(\w)} - \overline{\vp(\bm{0})} \qquad (\w \in \D^2).
\]
In particular, if $w_2 = 0$, then
\[
C_1^*(\bar{w}_1 K_1(\cdot, \w)) = \overline{\vp((w_1, 0))} - \overline{\vp(\bm{0})} \qquad (w_1 \in \D).
\]
Finally, we compute $B_2$. Observe that
\[
B_2^* + D_2^* (\bar{w}_1 K_1(\cdot, \w)) + D_4^*(\bar{w}_2 K_2(\cdot, \w)) = K_2(\cdot, \w),
\]
for all $\w \in \D^2$. If $w_2 = 0$, then
\[
B_2^* + D_2^* (\bar{w}_1 K_1(\cdot, (w_1, 0))) = K_2(\cdot, (w_1,0)),
\]
which implies that $B_2^* = K_2(\cdot, \bm{0})$. Finally, if we let $\w \in \D^2$, then
\[
B_2^*C_1^*(\bar{w}_1 K_1(\cdot, \w)) = (\overline{\vp (w_1,0)} - \overline{\vp(\bm{0})})K_2(\cdot, \bm{0}) = \overline{\vp(\bm{0})} K_2(\cdot, (w_1,0)) - \overline{\vp(\bm{0})} K_2(\cdot, \bm{0}),
\]
by assumption \eqref{eqn: agler condition 4}, and hence
\[
B_2^*C_1^*(\bar{w}_1 K_1(\cdot, \w)) = \overline{\vp(\bm{0})} (K_2(\cdot, (w_1,0)) - K_2(\cdot, \bm{0})) = \overline{\vp(\bm{0})} D_2^* (\bar{w}_1 K_1(\cdot, \w)).
\]
This proves that $\vp(\bm{0}) D_2 = C_1 B_2$.

\NI  (4)$\Rightarrow$ (1) is essentially along the lines of \cite[Theorem 2.3]{DS}. However, for the sake of completeness, we sketch the proof. Let $a=\vp(\bm{0})$. Since $V V^* = I$, it follows that
\[
I = \begin{bmatrix}|a|^2+B_1B_1^*+B_2B_2^*&aC_1^*+B_1D_1^*+B_2D_2^*&aC_2^*+B_2D_4^*
\\
\overline{a}C_1+D_1B_1^*+D_2B_2^* &C_1C_1^*+D_1D_1^*+D_2D_2^*&C_1C_2^*+D_2D_4^*
\\
\overline{a}C_2+D_4B_2^* & C_2C_1^*+D_4D_2^* & C_2C_2^*+D_4D_4^*
\end{bmatrix}.
\]
Then there exists $y\in \mathbb{C}$ such that
\[
|y|^2=|a|^2+B_2B_2^*=1-B_1B_1^*>0,
\]
as $a \neq 0$. Let $x=\frac{a}{y}$, and
\[
V_1 = \begin{bmatrix} y & B_1
\\
\frac{1}{x}C_1 & D_1 \end{bmatrix} \quad \text{and} \quad
V_2 = \begin{bmatrix} x & \frac{1}{y}B_2 \\
C_2 & D_4\end{bmatrix}.
\]
Clearly, $x \neq 0$. We first claim that $V_1$ and $V_2$ are co-isometries. Indeed
\[
V_2 V_2^* = \begin{bmatrix}|x|^2+\frac{1}{|y|^2}B_2B_2^*&xC_2^*+\frac{1}{y}B_2D_4^*
\\
\bar{x}C_2+\frac{1}{\bar{y}}D_4B_2^* & C_2C_2^*+D_4D_4^*
\end{bmatrix}  = \begin{bmatrix} 1 & xC_2^*+\frac{1}{y}B_2D_4^*
\\
\bar{x}C_2+\frac{1}{\bar{y}}D_4B_2^* & C_2C_2^*+D_4D_4^*
\end{bmatrix}.
\]
as $|y|^2=|a|^2+B_2B_2^*$ and $a=xy$. Also note that, since $aC_2^*+B_2D_4^*=0$, we have that $xC_2^*+\frac{1}{y}B_2D_4^*=0$, which implies that $V_2$ is a co-isometry. Next, we compute
\[
V_1V_1^* = \begin{bmatrix} |y|^2+B_1B_1^* & \frac{y}{\overline{x}}C_1^* + B_1D_1^*
\\
\frac{\overline{y}}{x}C_1 + D_1B_1^*&\frac{1}{|x|^2}C_1C_1^*+D_1D_1^*\end{bmatrix}.
\]
Since $C_1C_1^*+D_1D_1^*+D_2D_2^*=1$, $aD_2=C_1B_2$, $a=xy$ and $|y|^2-|a|^2=B_2B_2^*$, we have
\[
\frac{1}{|x|^2}C_1C_1^*+D_1D_1^*=1.
\]
Moreover, since $aC_1^*+B_1D_1^*+B_2D_2^*=0$ implies that $\frac{y}{\overline{x}}C_1^*+B_1D_1^*=0$, we have that $V_1$ is also a co-isometry. Finally, set $\vp_1(\z)=\tau_{V_1}(z_1)$ and $\vp_2(\z) = \tau_{V_2}(z_2)$, $\z \in \D^2$. It is then easy to check that
\[
\vp(\z) = \tau_V(z) = \tau_{V_1}(z_1) \tau_{V_2}(z_2) = \vp_1(\z) \vp_2(\z),
\]
for all $\z \in \D^2$. This completes the proof.
\end{proof}

%\begin{rem}\label{rem: non-trivial factors}
%Suppose $\vp \in \cls(\D^2)$ satisfies the hypotheses of part (1) of Theorem \ref{thm: agler kernel and fact}. That is, $\vp(\z) = \vp_1(z_1) \vp_2(z_2)$, $\z \in \D^2$, for some $\vp_1, \vp_2 \in \cls(\D)$. Let $\vp_i = \tau_{V_i}$, $i=1,2$, for some isometric colligation
%\[
%V_i = \begin{bmatrix} a_i & B_i \\ C_i & D_i \end{bmatrix} \in \clb(\mathbb{C} \oplus \clh_i).
%%Then it is easy to see that $\vp_1 $ and $\vp_2$ are non-constant if and only if
%\[
%B_1 D^{p}_1 C_1 \neq 0 \quad \mbox{and} \quad B_2 D^{q}_2 C_2 \neq 0,
%\]
%for some $p, q \geq 0$. This is also equivalent to: The Agler kernels $\{K_1, K_2\}$ of $\vp$ satisfies the additional conditions that $(1-z_1\bar{w}_1) K_1(\z, \w)$ and $(1-z_2 \bar{w}_2) K_2((0,z_2), (0, w_2))$ are non-constant functions. This conclusion results from the following general fact: If $\psi \in \cls(\D)$, then $\psi$ is non-constant if and only if
%\[
%(z, w) \mapsto  (1 - \psi(z) \overline{\psi(w)}),
%\]
%is non-constant. Therefore, the above conditions ensure non-trivial factorizations of $\vp$ in Theorem \ref{thm: agler kernel and fact}.
%\end{rem}

In the setting of Theorem \ref{thm: agler kernel and fact}, one can also explicitly compute the entries of the block operator matrix $V$ in part (4). The technique involved in the computation is standard and quite well known (cf. \cite[Remark 3.6]{BB}). However, we outline some details for the sake of making this paper self-contained. We already know that
\[
B_2^* = K_2(\cdot, \bm{0}) \quad \mbox{and} \quad C_1^*(\bar{w}_1 K_1(\cdot, \w)) = \overline{\vp((w_1, 0))} - \overline{\vp(\bm{0})},
\]
and
\[
D_2^* (\bar{w_1} K_1(\cdot, (w_1,0))) = K_2(\cdot, (w_1,0)) - K_2(\cdot, \bm{0}),
\]
for all $\w \in \D^2$. Now let $g\in \clh_{K_2}$ and $\w \in \D^2$. Then
\[
(z_1 D_2 g)(\w) = \langle g, K_2(\cdot, (w_1, 0)) - K_2(\cdot, \bm{0})\rangle = g((w_1, 0))- g (\bm{0}),
\]
and hence
\[
(D_2 g)(\w) = \frac{g((w_1, 0)) - g(\bm{0})}{w_1} \qquad (\w \in \D^2).
\]
for all $g \in \clh_{K_2}$. Similarly, if $w_1 = 0$, then \eqref{eqn: Agler 2nd equation} implies that
\[
\bar{w}_2 D_4^* K_2(\cdot, \w) = K_2(\cdot, \w) - K_2(\cdot, (w_1, 0)),
\]
and hence, in a similar way we have
\[
(D_4 g)(\w) = \frac{g(\w) - g((w_1, 0))}{w_2} \qquad (g \in \clh_{K_2}, \w \in \D^2),
\]
as well as
\[
(D_1 f)(\w) = \frac{f(\w) - f(\bm{0})}{w_1} \qquad (f\in \clh_{K_1}, \w \in \D^2).
\]
Now we turn to compute $C_1$ and $C_2$. Since $ C_1^*(\bar{w}_1 K_1(\cdot, \w)) = \overline{\vp((w_1, 0))} - \overline{\vp(\bm{0})}$, we have
\[
(z_1 C_1 1)(\w) = \langle C_1 1, \bar{w}_1 K_1(\cdot, \w) \rangle =  {\vp((w_1, 0))} - {\vp(\bm{0})},
\]
and hence
\[
(C_1 1)(\w) = \frac{\vp(w_1,0)-\vp(\bm{0})}{w_1}\quad \mbox{and}\quad
(C_2 1)(\w)=\frac{\vp(\w)-\vp(w_1,0)}{w_2},
\]
for all $\w \in \D^2$. Finally, we note that $(B_1 f)(\w) = f(\bm{0})$ and $(B_2 g)(\w) = g(\bm{0})$ for all $f\in \clh_{K_1}$ and $g\in \clh_{K_2}$.

In particular, if $\vp$ is inner, then we have the following:

\begin{ex}
Given an inner function $\vp \in \cls(\D^2)$ satisfying one of the equivalent conditions of Theorem \ref{thm: agler kernel and fact}, we have $\vp(\z) = \vp_1(z_1) \vp_2(z_2)$, $\z \in \D^2$, for some $\vp_1$ and $\vp_2$ in $\cls(\D)$. Then
\[
1=|\vp(\z)| = |\vp_1(z_1)| |\vp_2(z_2)| \leq |\vp_1(z_1)| \leq 1 \qquad (\z \in \mathbb{T}^2 \mbox{ a.e.})
\]
from which we see that $\vp_1$, as well as $\vp_2$, are inner functions. Moreover, for $\z , \w \in \D^2$, we have
\[
1-\vp(\z) \overline{\vp(\w)} = 1 - \vp_1(z_1) \overline{\vp_1(w_1)} + \vp_1(z_1) (1-\vp_2(z_2) \overline{\vp_2(w_2)})\overline{\vp_1(w_1)}.
\]
Hence $\{K_1, K_2\}$ are Agler kernels of $\vp$, where
\[
K_1(\z, \w) = \frac{1-\vp_1(z_1) \overline{\vp_2(w_1)}}{1-z_1\bar{w}_1} \quad \mbox{and} \quad
K_2(\z, \w) = \frac{\vp_1(z_1) (1 - \vp_2(z_2)\overline{\vp_2(w_2)})\overline{\vp_1(w_1)}}{1 - z_2 \bar{w}_2}.
\]
In this case the corresponding reproducing kernel Hilbert spaces are given by
\[
\clh_{K_1} = \clq_{\vp_1} \otimes \mathbb{C} \quad \text{and}\quad \clh_{K_2} = \vp_1 \mathbb{C} \otimes \clq_{\vp_2},
\]
where $\clq_{\vp_1} = H^2(\D)/ \vp_1 H^2(\D)$ and $\clq_{\vp_2} = H^2(\D)/ \vp_2 H^2(\D)$ are model spaces. Moreover, the co-isometric (unitary) colligation operator $V$ with state space $\clh_{K_1} \oplus \clh_{K_2}$ is given by
\[
V=\begin{bmatrix}\vp(\bm{0}) & P_{\mathbb{C}}|_{\clq_{\vp_1}} & \vp_1(0) P_{\mathbb{C}} M^*_{\vp_1} \otimes P_{\mathbb{C}}|_{\clq_{\vp_2}}
\\
\vp_2(0) M_z^* M_{\vp_1}|_{\mathbb{C}} & M_z^*|_{\clq_{\vp_1}} & M_z^* M_{\vp_1} P_{\mathbb{C}} M^*_{\vp_1} \otimes P_{\mathbb{C}}|_{\clq_{\vp_2}}
\\
M_{\vp_1}|_{\mathbb{C}} \otimes M_z^*M_{\vp_2}|_{\mathbb{C}} & 0 & I_{\vp_1 \mathbb{C}} \otimes M_z^*|_{\clq_{\vp_2}}
\end{bmatrix}.
\]
\end{ex}

\smallskip

Finally, we comment on the assumption that $\vp(\bm{0}) \neq 0$ in Theorem \ref{thm: agler kernel and fact}.

\begin{rem}\label{rem: phi(0) =0}
In the proof of Theorem \ref{thm: agler kernel and fact}, $\vp(\bm{0}) \neq 0$ has been used only for the implication (4)$\Rightarrow$ (1). In the $\vp(\bm{0}) = 0$ case, one can easily modify the argument of the aforementioned case to prove a similar statement. Here is a sample statement:

\NI Let $\vp \in \cls(\D^2)$ be a non-zero function and suppose $\vp(\bm{0}) = 0$. Then the following are equivalent:

(1) $\vp(\z) = \vp_1(z_1) \vp_2(z_2)$ for some $\vp_1, \vp_2 \in \cls(\D)$ such that $\vp_2(0) \neq 0$.

(2) $\vp(\z) = z_1^{p} \vp_1(z_1) \vp_2 (z_2)$ for some $p \geq 1$ and $\vp_1, \vp_2  \in \cls(\D)$ such that $\vp_1(0)\neq 0$ and $\vp_2(0) \neq 0$.

(3) There exists $p \geq 1$ such that $\tilde \vp(\z) = z_1^{-p}\vp(\z) \in \cls(\D^2)$, $\tilde\vp(\bm{0})\neq 0$, and there exist Agler kernels $\{K_1,K_2\}$ of $\tilde \vp$ such that	$K_1$ depends only on $z_1$ and $\bar{w}_1$, and
\[
\overline{\tilde\vp(\bm{0})} K_2(\cdot, (w_1,0)) = \overline{\tilde\vp (w_1,0)} K_2(\cdot, \bm{0}) \qquad (w_1 \in \D).
\]

(4) There exists $p \geq 1$ such that $\tilde \vp(\z) = z_1^{-p} \vp(\z) \in \cls(\D^2)$, $\tilde \vp(\bm{0})\neq 0$, and $\tilde \vp = \tau_V$ for some co-isometric colligation
\[
V=\begin{bmatrix} \tilde \vp(\bm{0})&B_1&B_2\\C_1&D_1&D_2\\C_2&0&D_4 \end{bmatrix},
\]
such that $\tilde\vp(\bm{0}) D_2 = C_1 B_2$.
\end{rem}

\section{Counterexamples and a converse}\label{sec: Counterexamples}

We now return to two-variable inner functions, which we encountered in Section \ref{sec-4}. The aim of this section is to further analyze Theorem \ref{thm-inner new}. We begin by exhibiting counterexamples to the converse of Theorem \ref{thm-inner new}. Then, in Theorem \ref{thm: weak converse}, we present a weak converse to Theorem \ref{thm-inner new}.

\begin{ex}\label{example: counter}
Fix $t \in (0,1)$, and define
\[
\vp_t(\z)=\frac{z_1z_2-t}{1-tz_1z_2} \qquad (\z\in\D^2).
\]
It is fairly easy to verify that
\[
|\vp_t(\z)| = 1 \qquad (\z \in \mathbb{T}^2),
\]
and hence, $\vp_t$ is a rational inner function. Contrary to what we want, let us assume that there are Hilbert spaces $\clh_1$ and $\clh_2$, an operator $D_1 \in C_{0 \cdot}$, and an isometric colligation
\[
V_t=\begin{bmatrix} -t& B_1&B_2\\C_1&D_1&D_2\\C_2&0&D_3 \end{bmatrix}
\in \clb(\mathbb{C}\oplus\clh_1\oplus\clh_2)
\]
such that $\tau_{V_t} =\vp_t$. Since
\[
\vp_t(\z) + t = \frac{(1-t^2)z_1z_2}{1-tz_1z_2},
\]
the preceding equality yields
\[
\frac{(1-t^2)z_1z_2}{1-tz_1z_2}= \begin{bmatrix}B_1 & B_2 \end{bmatrix} \left(
\begin{bmatrix} I&0\\0&I \end{bmatrix}-  \begin{bmatrix} z_1&0\\0&z_2 \end{bmatrix}
\begin{bmatrix} D_1&D_2\\0&D_3 \end{bmatrix}
\right)^{-1} \begin{bmatrix} z_1&0\\0&z_2 \end{bmatrix} \begin{bmatrix} C_1\\C_2 \end{bmatrix}.
\]
Now the left side is equal to
\[
(1-t^2)z_1z_2(1+tz_1z_2+t^2z^2_1z^2_2+\cdots),
\]
and the right side is equal to
\[
z_1B_1(I-z_1D_1)^{-1}C_1 + z_2B_2(I-z_2D_4)^{-1}C_2 + z_1z_2B_1(I-z_1D_1)^{-1} D_2(I-z_2 D_3)^{-1}C_2.
\]
Comparing the coefficients of $z_1$, we see that $B_1D_1^nC_1=0$, $n\geq 0$. Since $V_t^* V_t = I$, we have
\[
\begin{bmatrix} -t& C^*_1&C^*_2\\B^*_1&D^*_1&0\\B^*_2&D_2^*&D^*_3 \end{bmatrix}
\begin{bmatrix} -t& B_1&B_2\\C_1&D_1&D_2\\C_2&0&D_3 \end{bmatrix} = \begin{bmatrix} 1& 0&0\\0&I&0\\0&0&I \end{bmatrix}.
\]
In particular $B_1^*B_1+D_1^*D_1=I$ and $-tB_1+C_1^*D_1=0$. The first equality implies (see the proof of the equality in \eqref{eqn: infinte sum 1 =1}) that
\[
\sum_{n=0}^\infty D_1^{*n} B_1^*B_1 D_1^n = I,
\]
in the strong operator topology as $D_1 \in C_{0 \cdot}$. Therefore
\[
\sum_{n=0}^{\infty} \|B_1D_1^nh \|^2 = \|h\|^2,
\]
for all $h \in\clh_1$. In particular, if we choose $h = C_1(1)$, then
\[
\sum_{n=0}^{\infty} \|B_1D_1^nC_1(1) \|^2 = \|C_1(1)\|^2.
\]
Since $B_1D_1^nC_1=0$ for all $n\geq 0$, we deduce$C_1=0$. Then $-tB_1+C_1^*D_1=0$ implies that $B_1=0$, and hence $D_1^*D_1=I$. However, this and the fact that $D_1 \in C_{0 \cdot}$ are mutually contradictory. This shows that $\vp_t \neq \tau_{V_t}$ for any isometric colligation $V_t$ and $D_1 \in C_{0 \cdot}$.
\end{ex}

Now we turn to a weak converse of Theorem \ref{thm-inner new} in the setting of rational inner functions. We need the following inverse formula of $2 \times 2$ block matrices \cite[page 18]{HJ}:

\begin{thm}
Let $X=\begin{bmatrix}P&Q\\R&S \end{bmatrix}\in \clb(\mathbb{C}^m\oplus \mathbb{C}^n)$, and suppose that $P$ is invertible. Then $X$ is invertible if and only if $\Delta:= S-RP^{-1}Q$ is invertible. In this case, the inverse of $X$ is given by
\[
X^{-1} = \begin{bmatrix}P^{-1}+P^{-1}Q \Delta^{-1} R P^{-1} & -P^{-1} Q \Delta^{-1}
\\
- \Delta^{-1} R P^{-1} & \Delta^{-1}
\end{bmatrix}.
\]
\end{thm}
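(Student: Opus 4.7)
The plan is to reduce the statement to the classical Schur complement factorization. Since $P$ is invertible, I first observe the identity
\[
X = \begin{bmatrix} I_m & 0 \\ RP^{-1} & I_n \end{bmatrix} \begin{bmatrix} P & 0 \\ 0 & \Delta \end{bmatrix} \begin{bmatrix} I_m & P^{-1}Q \\ 0 & I_n \end{bmatrix},
\]
with $\Delta = S - RP^{-1}Q$. This can be verified by carrying out two block multiplications on the right-hand side, the cross-term $RP^{-1}Q$ combining with $\Delta$ to produce $S$ in the bottom-right entry.

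Next, the two triangular block factors are always invertible (their inverses are obtained by simply flipping the sign of the off-diagonal block), so $X$ is invertible if and only if the middle diagonal block is invertible, which, given that $P$ is invertible, is equivalent to the invertibility of $\Delta$. This proves the equivalence.

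For the explicit formula, I would invert the three factors and multiply in the reverse order:
\[
X^{-1} = \begin{bmatrix} I_m & -P^{-1}Q \\ 0 & I_n \end{bmatrix} \begin{bmatrix} P^{-1} & 0 \\ 0 & \Delta^{-1} \end{bmatrix} \begin{bmatrix} I_m & 0 \\ -RP^{-1} & I_n \end{bmatrix}.
\]
A short block computation then yields the four entries asserted in the statement. Alternatively, one can bypass the factorization completely by defining $Y$ to be the matrix on the right-hand side of the claimed formula and verifying $XY = I$ and $YX = I$ directly; the $(2,2)$ entry of $XY$ collapses via $R(-P^{-1}Q\Delta^{-1}) + S\Delta^{-1} = (S-RP^{-1}Q)\Delta^{-1} = I_n$, and the other three entries are similar.

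There is no genuine conceptual obstacle here; the whole argument is the Schur complement identity together with block multiplication. The only point requiring a bit of care is the bookkeeping in the direct verification, so I would prefer the LDU-factorization route as it organizes the computation most transparently and makes both the equivalence and the formula fall out simultaneously.
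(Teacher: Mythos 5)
Your LDU/Schur-complement factorization argument is correct, and both the verification of the factorization and the resulting inverse formula check out. The paper itself gives no proof of this statement---it is cited as a known fact from Horn and Johnson \cite[page 18]{HJ}---so there is no in-paper argument to compare against; your proof is the standard one found in the reference.
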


We are now ready to establish the promised weak converse of Theorem \ref{thm-inner new}.

\begin{thm}\label{thm: weak converse}
Let $\vp \in \cls(\D^2)$ be a rational inner function and suppose $\vp(\bm{0})\neq 0$. Then the following are equivalent:

(1) $\vp = \tau_V$ for some isometric colligation
\[
V=\begin{bmatrix}a & B\\C&D\end{bmatrix}=
\begin{bmatrix}
a & B_1&B_2
\\
C_1&D_1&D_2
\\
C_2 & 0 & D_3\end{bmatrix} \in \clb(\mathbb{C}\oplus\clh_1\oplus\clh_2),
\]
where $\clh_1$ and $\clh_2$ are finite-dimensional Hilbert spaces and $D_1, D_3\in C_{0 \cdot }$.

(2) $\vp(\z)=\vp_1(z_1)\vp(z_2)$, $\z \in \D^2$, for some rational inner functions $\vp_1$ and $\vp_2$ (in $\cls(\D)$).
\end{thm}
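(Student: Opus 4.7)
For direction $(2) \Rightarrow (1)$, I would build an isometric colligation of the required form via a product construction. Starting from rational inner factors $\vp_1, \vp_2 \in \cls(\D)$, Theorem \ref{thm: classical inner} produces isometric colligations $V_i = \bigl[\begin{smallmatrix}\alpha_i & \beta_i \\ \gamma_i & \delta_i\end{smallmatrix}\bigr]$ on $\mathbb{C} \oplus \clh_i$ with $\delta_i \in C_{0\cdot}$, and rationality of $\vp_i$ lets me take each $\clh_i$ finite-dimensional (e.g., $\clh_i = \clq_{\vp_i}$). I would then set
\[
V = \begin{bmatrix}\alpha_1\alpha_2 & \beta_1 & \alpha_1\beta_2 \\ \alpha_2\gamma_1 & \delta_1 & \gamma_1\beta_2 \\ \gamma_2 & 0 & \delta_2\end{bmatrix}, \qquad D_1 := \delta_1, \; D_3 := \delta_2,
\]
verify $V^*V = I$ by a nine-entry block computation that consumes $V_1^*V_1 = V_2^*V_2 = I$, and match $\tau_V(\z)$ (expanded via the block upper-triangular inverse formula) termwise against $\tau_{V_1}(z_1)\tau_{V_2}(z_2) = \vp(\z)$.

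For $(1) \Rightarrow (2)$, the strategy is to bring the hypothesis into the scope of Theorem \ref{thm: agler kernel and fact}(4) and invoke the implication (4) $\Rightarrow$ (1) there. Since $V$ is an isometry on a finite-dimensional space, $V$ is unitary, so $VV^* = I$ holds alongside $V^*V = I$. Writing $N := \vp(\bm{0}) D_2 - C_1 B_2$, the $(1,3)$ and $(2,3)$ entries of $VV^* = I$ give $\vp(\bm{0})C_2^* + B_2 D_3^* = 0$ and $C_1 C_2^* + D_2 D_3^* = 0$, which combine to $N D_3^* = 0$; dually, the $(1,2)$ and $(2,3)$ entries of $V^*V = I$ give $D_1^* C_1 = -\vp(\bm{0}) B_1^*$ and $D_1^* D_2 = -B_1^* B_2$, combining to $D_1^* N = 0$. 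The crucial step is to upgrade these partial vanishings to $N = 0$, which is precisely the hypothesis $\vp(\bm{0}) D_2 = C_1 B_2$ of Theorem \ref{thm: agler kernel and fact}(4). I plan to carry this upgrade out by passing to a minimal realization of $\vp$ of the same form: replace $\clh_1, \clh_2$ by the reproducing kernel Hilbert spaces $\clh_{K_1}, \clh_{K_2}$ of the canonical Agler kernels
\[
K_i(\z,\w) = [B_1\ B_2]\,(I - E_{\clh_1 \oplus \clh_2}(\z)D)^{-1} P_i (I - E_{\clh_1 \oplus \clh_2}(\w) D)^{-*} \begin{bmatrix}B_1^*\\ B_2^*\end{bmatrix}
\]
(with $P_1, P_2$ the orthogonal projections onto $\clh_1, \clh_2$), noting that the upper-triangular form of $D$ automatically forces $K_1$ to depend only on $(z_1, \bar w_1)$, and that the $C_{0\cdot}$ conditions on the diagonal blocks are preserved under the compression. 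In this minimal form, observability of the pair $(B_1, D_1)$ converts $D_1^* N = 0$ into $N = 0$, securing $\vp(\bm{0}) D_2 = C_1 B_2$. Theorem \ref{thm: agler kernel and fact}(4) $\Rightarrow$ (1) then yields Schur factors $\vp_1, \vp_2 \in \cls(\D)$ with $\vp(\z) = \vp_1(z_1)\vp_2(z_2)$, and the explicit factor colligations $V_1, V_2$ constructed in that proof have $(2,2)$-blocks $D_1$ and $D_3$, so each lies in $C_{0\cdot}$ on a finite-dimensional state space; Theorem \ref{thm: classical inner} applied to the isometric adjoints $V_i^*$ then concludes that $\vp_1$ and $\vp_2$ are rational inner.

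The main technical obstacle will be the minimal-realization reduction in direction $(1) \Rightarrow (2)$: I must verify simultaneously that compression to $\clh_{K_1} \oplus \clh_{K_2}$ preserves the block-upper-triangular form of $D$, the $C_{0\cdot}$ conditions on $D_1$ and $D_3$, and the isometric property of the colligation, and that the compressed pair $(B_1, D_1)$ is observable in the exact sense needed to turn $D_1^* N = 0$ into $N = 0$. The coexistence of all these preservation and minimality requirements is the most delicate part of the plan.
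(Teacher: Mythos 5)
Your direction $(2)\Rightarrow(1)$ is the same construction as the paper's: the product colligation has the identical block shape (up to notation), built from one-variable isometric colligations on the finite-dimensional model spaces $\clq_{\vp_i}$ furnished by Theorem \ref{thm: classical inner}.

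For $(1)\Rightarrow(2)$, the algebra is fine up to the two identities $N D_3^* = 0$ and $D_1^* N = 0$, and targeting Theorem \ref{thm: agler kernel and fact}(4) is the right idea. But the proposed upgrade to $N=0$ does not go through. The claim that ``observability of $(B_1,D_1)$ converts $D_1^*N=0$ into $N=0$'' is not a valid inference: observability of $(B_1,D_1)$ means $\bigcap_{k\geq 0}\ker(B_1 D_1^k)=\{0\}$, whereas $D_1^*N=0$ only says that the range of $N$ lies in $\ker D_1^*$, and to conclude $N=0$ you would need $\ker D_1^*=\{0\}$, i.e., surjectivity of $D_1$ --- a property observability does not supply (for instance $D_1=0$, $B_1=I$ is observable with $D_1^*=0$). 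More to the point, the entire minimal-realization detour, with its stack of preservation requirements you flag as the delicate step, is unnecessary: you already hold the winning identity $ND_3^*=0$, and $D_3^*$ is automatically invertible in this setting. Indeed $V$ is an isometry on a finite-dimensional space, hence unitary, and since $a=\vp(\bm{0})\neq 0$ the $2\times 2$ block-inversion formula from \cite{HJ} shows that the Schur complement of $a$ in $V$ is invertible and that the $(2,2)$-block of $V^{-1}=V^*$, namely $D^*$, is a nonzero scalar multiple of the inverse of that Schur complement. Thus $D$ is invertible, and since $D$ is block upper triangular, $D_3$ is invertible; $ND_3^*=0$ then forces $N=0$ immediately. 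This is exactly the paper's route, and it needs no compression to $\clh_{K_1}\oplus\clh_{K_2}$ and no observability hypothesis.
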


\begin{proof}
(1)$\Rightarrow$ (2): Since $V \in \clb(\mathbb{C}\oplus\clh_1\oplus\clh_2)$ is an isometry and $\mbox{dim} \clh_i < \infty$, $i=1,2$, $V$ is onto, that is, $V$ is a unitary operator. In particular, $V$ is invertible. Since
\[
a=\vp(\bm{0})\neq 0,
\]
by the above theorem, we conclude that $aD-CB$ is invertible and
\[
V^{-1}=
\begin{bmatrix} a^{-1}+a^{-1}B(aD-CB)^{-1}C& -B(aD-CB)^{-1}\\
-(aD-CB)^{-1}C& a^{-1}(aD-CB)^{-1}
\end{bmatrix}.
\]
Since $V^*=V^{-1}$, in particular, we have
\[
D^* =
\begin{bmatrix} D_1^*&0\\D_2^*& D_3^* \end{bmatrix}=a^{-1}(aD-CB)^{-1}=
a^{-1}\begin{bmatrix} aD_1-C_1B_1 & aD_2-C_1B_2 \\ -C_2B_1 & aD_3-C_2B_2 \end{bmatrix}^{-1},
\]
and hence
\[
a\begin{bmatrix} aD_1-C_1B_1 & aD_2-C_1B_2 \\ -C_2B_1 & aD_3-C_2B_2 \end{bmatrix}\begin{bmatrix} D_1^*&0\\D_2^*&D_3^* \end{bmatrix}=\begin{bmatrix} I&0\\0&I \end{bmatrix}.
\]
But then this implies $(aD_2-C_1B_2)D_3^*=0$. Note that the invertibility of $D$ immediately implies that $D_3$ is also invertible. Then $aD_2-C_1B_2=0$,
and hence, by Theorem \ref{thm: agler kernel and fact}, there exist rational inner functions $\vp_1$ and $\vp_2$ (here $\clh_1$ and $\clh_2$ are finite dimensional Hilbert spaces) such that $\vp(\z) = \vp_1(z_1)\vp(z_2)$, $\z \in \D^2$.

\NI (2)$\Rightarrow$ (1): Since $\vp_i$ ($\in \cls(\D)$) is a rational inner function, there exists an isometric colligation
\[
V_i = \begin{bmatrix}
a_i&B_i\\C_i&D_i
\end{bmatrix} \in \clb(\mathbb{C}\oplus \clh_i),
\]
such that $\mbox{dim}(\clh_i)<\infty$, $D_i\in C_{0 \cdot }$, and $\vp_i = \tau_{V_i}$ for all $i=1,2$. We define
\[
V=\begin{bmatrix} a_1a_2&B_1&a_1B_2\\a_2C_1&D_1&C_1B_2\\C_2&0&D_2 \end{bmatrix}.
\]
Then a somewhat careful computation (or see the proof of \cite[Theorem 2.2]{DS}) yields that $\vp = \tau_V$.
\end{proof}

In this connection, and also in the context of Remark \ref{rem: D C dot zero}, it is probably worth mentioning that in the finite dimensional case we have the following: If $\begin{bmatrix}D_1&D_2\\0&D_3 \end{bmatrix} \in \clb(\mathbb{C}^p \oplus \mathbb{C}^q)$ for some $p, q \geq 1$, then
\[
\sigma\Big(\begin{bmatrix}D_1&D_2\\0&D_3 \end{bmatrix} \Big) =\sigma(D_1)\cup\sigma(D_3),
\]
and in particular, $\begin{bmatrix}D_1&D_2\\0&D_3 \end{bmatrix} \in C_{0 \cdot}$ if and only if $D_1, D_3 \in C_{0 \cdot}$.

Finally, we point out that part $(1)$ of Theorem \ref{thm: agler kernel and fact} and part $(2)$ of Theorem \ref{thm: weak converse} are related (in a different direction) to essential normality of Beurling type quotient modules of $H^2(\D^2)$ \cite{GW}.

\vspace{0.1in}

\noindent{Acknowledgement:} The second author is supported in part by NBHM (National Board
of Higher Mathematics, India) grant NBHM/R.P.64/2014, and the Mathematical Research
Impact Centric Support (MATRICS) grant, File No : MTR/2017/000522, by the Science and
Engineering Research Board (SERB), Department of Science \& Technology (DST), Government of India.

\end{document}